\theoremstyle{plain}
\newtheorem{thm}{Theorem}
\newtheorem{lem}[thm]{Lemma}
\newtheorem{prop}[thm]{Proposition}
\newtheorem{nota}[thm]{Notation}
\newtheorem{rem}[thm]{Remark}
\newtheorem{defin}[thm]{Definition}
\newcommand{\R}{\mathbb{R}}
\newcommand{\Z}{\mathbb{Z}}
\newcommand{\N}{\mathbb{N}}
\def\multiset#1#2{\ensuremath{\left(\kern-.2em\left(\genfrac{}{}{0pt}{}{#1}{#2}\right)\kern-.2em\right)}}
\begin{document}

\title{Treelike families of multiweights}
\author{Agnese Baldisserri \  \ \  \ \ Elena Rubei}
\date{}
\maketitle

\def\thefootnote{}
\footnotetext{ \hspace*{-0.36cm}
{\bf 2010 Mathematical Subject Classification: 05C05, 05C12, 05C22} 

{\bf Key words: weighted trees, dissimilarity families} }

\begin{abstract}
Let  ${\cal T}=(T,w)$ be a weighted  finite tree with leaves $1,..., n$.
For any  $I :=\{i_1,..., i_k \} \subset \{1,...,n\}$,
let $D_I ({\cal T})$ be
the weight of the minimal subtree of $T$ connecting $i_1,..., i_k$;
the  $D_{I} ({\cal T})$ are called 
$k$-weights of  ${\cal T}$. Given  a family 
of  real numbers parametrized by the $k$-subsets of $
\{1,..., n\}$, $\{D_I\}_{I  \in {\{1,...,n\} \choose k}}$,
  we say that a weighted tree ${\cal T}=(T,w)$ with leaves $1,..., n$ realizes the family 
if $D_I({\cal T})=D_I$ for any $ I  $.
  We give a characterization of the families of real numbers that are realized by some weighted tree.
\end{abstract}

\section{Introduction}

For any graph $G$, let $E(G)$, $V(G)$ and $L(G)$ 
 be respectively the set of the edges,   
the set of the vertices and  the set of the leaves of $G$.
A {\bf weighted graph} ${\cal G}=(G,w)$ is a graph $G$ 
endowed with a function $w: E(G) \rightarrow \R$. 
For any edge $e$, the real number $w(e)$ is called the weight of the edge. If all the weights are nonnegative (respectively positive), 
we say that the graph is {\bf nonnegative-weighted} (respectively {\bf positive-weighted});
if  the weights  of 
the internal edges are nonzero  (respectively positive),
 we say that the graph is {\bf internal-nonzero-weighted}
 (respectively {\bf internal-positive-weighted}).
For any finite subgraph $G'$ of  $G$, we define  $w(G')$ to be the sum of the weights of the edges of $G'$. 
In this paper we will deal only with weighted finite trees.

\begin{defin}
Let ${\cal T}=(T,w) $ be a weighted tree. 
For any distinct $ i_1, .....,i_k \in V(T)$,
 we define $ D_{\{i_1,...., i_k\}}({\cal T}) $ to be the weight of the minimal 
subtree containing $i_1,....,i_k$. We call this subtree ``the subtree realizing  $ D_{\{i_1,...., i_k\}}({\cal T}) $''.
 More simply, we denote 
$D_{\{i_1,...., i_k\}}({\cal T})$ by
$D_{i_1,...., i_k}({\cal T})$ for any order of $i_1,..., i_k$.  
We call  the  $ D_{i_1,...., i_k}({\cal T})$ 
the {\bf $k$-weights} of ${\cal T}$ and
 we call  a $k$-weight of ${\cal T}$ for some $k$  a {\bf multiweight}
 of ${\cal T}$.
\end{defin}


For any $S \subset V(T)$,
we call  the family $\{D_I\}_{I  \in {S \choose k}}$
the {\bf family of the $k$-weights}  of $({\cal T}, S)$ or the $k$-dissimilarity family of   $({\cal T}, S)$.

We can wonder when a family of real numbers is the family of the $k$-weights
of some weighted tree and of some subset of the set of its vertices.
If $S$ is a finite set, $k \in \N$ and $k < \# S$,
we say that a family of  real numbers
 $\{D_{I}\}_{I \in {S \choose k}}$  is {\bf treelike} (respectively p-treelike, nn-treelike, 
inz-treelike, ip-treelike, nn-ip-treelike) if there exist a weighted  (respectively 
positive-weighted,  nonnegative-weighted, internal-nonzero-weighted, 
internal-positive-weighted, nonegative-weighted and internal-positive weighted)  tree
 ${\cal T}=(T,w)$ and a subset $S$ 
of the set of its vertices such that $ D_{I}({\cal T}) = D_{I}$  for any 
 $k$-subset $I$ of $ S$.
In this case, we say that ${\cal T}=(T,w)$ {\bf realizes the family $\{D_I\}_{I \in {S \choose k} }$}.

If in addition 
$S \subset L(T)$,
we say that the family
 is  {\bf l-treelike} (respectively p-l-treelike, nn-l-treelike, inz-l-treelike,
  ip-l-treelike,   nn-ip-l-treelike).

A criterion for a metric on a finite set to be nn-l-treelike
was established in  \cite{B}, \cite{SimP}, \cite{Za}:

\begin{thm} \label{Bune} 
Let $\{D_{I}\}_{I \in {\{1,...,n\} \choose 2}}$ be a set of positive real numbers 
satisfying the triangle inequalities.
It is p-treelike (or nn-l-treelike) 
 if and only if, for all distinct $i,j,k,h  \in \{1,...,n\}$,
the maximum of $$\{D_{i,j} + D_{k,h},D_{i,k} + D_{j,h},D_{i,h} + D_{k,j}
 \}$$ is attained at least twice. 
\end{thm}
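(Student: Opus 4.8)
The plan is to prove the cycle of implications: p-treelike $\Rightarrow$ the four-point condition $\Rightarrow$ nn-l-treelike $\Rightarrow$ p-treelike. The last step is the easiest: since the $D_I$ are positive, in any nonnegative-weighted tree realizing the family with $\{1,\dots,n\}\subseteq L(T)$ every pendant edge must carry positive weight (otherwise two leaves would be at distance $0$), so contracting the zero-weight internal edges yields a positive-weighted tree realizing the same family, which is therefore p-treelike. For necessity (p-treelike $\Rightarrow$ four-point condition), I would fix distinct $i,j,k,h$ and examine the minimal subtree $T_{0}$ of a realizing tree connecting them: combinatorially $T_{0}$ is either a quartet tree with a single internal path separating one pair from another, or a star (or path) in which the four leaf-paths meet at one point. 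Writing $p_{i},p_{j},p_{k},p_{h}$ for the lengths of the four leaf-paths and $m\ge 0$ for the length of the internal path (with split, say, $ij|kh$ when $m=0$), one gets $D_{i,j}+D_{k,h}=p_{i}+p_{j}+p_{k}+p_{h}$ while $D_{i,k}+D_{j,h}=D_{i,h}+D_{j,k}=p_{i}+p_{j}+p_{k}+p_{h}+2m$, so the maximum of the three sums is attained at least twice; this direction uses only that the family is realized by some tree.

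The heart of the matter is the implication four-point condition $\Rightarrow$ nn-l-treelike, which I would prove by induction on $n$. The base case $n=3$ is the tripod with leaf-edges of lengths $\tfrac12(D_{i,j}+D_{i,k}-D_{j,k})$, which are nonnegative by the triangle inequalities (the four-point condition being vacuous there). For the inductive step, restrict the family to $\{1,\dots,n-1\}$: the hypotheses are hereditary, so by induction there is a nonnegative-weighted tree $T'$ with leaves $1,\dots,n-1$ realizing it, and it remains to graft a pendant edge for leaf $n$. I would choose a pair $(p,q)$ among $\{1,\dots,n-1\}$ \emph{minimizing} $\tfrac12(D_{a,n}+D_{b,n}-D_{a,b})$, call the minimum $\gamma\ge 0$, attach $n$ by an edge of weight $\gamma$ at the point $x$ on the $T'$-path from $p$ to $q$ lying at distance $D_{p,n}-\gamma=\tfrac12(D_{p,n}-D_{q,n}+D_{p,q})$ from $p$ (subdividing an edge of $T'$ if necessary, which the triangle inequalities guarantee places $x$ legitimately on that path), and call the result $T$. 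Then $D_{p,n}(T)=D_{p,n}$, $D_{q,n}(T)=D_{q,n}$, and distances among $1,\dots,n-1$ are unchanged, so only $D_{j,n}(T)=D_{j,n}$ for $j\notin\{p,q\}$ needs checking.

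That verification is the main obstacle, and it is exactly where the four-point condition enters. The minimality of $(p,q)$ forces $D_{p,j}+D_{q,n}\le D_{p,q}+D_{j,n}$ and $D_{p,n}+D_{q,j}\le D_{p,q}+D_{j,n}$ (subtract the inequality for the pair $(p,j)$, resp.\ $(q,j)$, from that defining $\gamma$), so $D_{p,q}+D_{j,n}$ is the maximum among the three sums for the quadruple $\{p,q,j,n\}$; the four-point condition then says this maximum equals $D_{p,j}+D_{q,n}$ or $D_{p,n}+D_{q,j}$. I would write $d_{T}(j,x)$ as the distance from $j$ to its branch point $m$ on the $p$–$q$ path (the median of $p,q,j$ in $T'$, at distance $\tfrac12(D_{p,j}+D_{p,q}-D_{q,j})$ from $p$ and $\tfrac12(D_{p,j}+D_{q,j}-D_{p,q})$ from $j$) plus $|\,d_{T'}(p,m)-d_{T'}(p,x)\,|$, and split into the two cases according to whether $m$ lies on the $p$-side or the $q$-side of $x$. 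In each case a short computation with these median formulas reduces the desired identity $d_{T}(j,x)+\gamma=D_{j,n}$ precisely to the equality provided by the four-point condition (e.g.\ in the $p$-side case, to $D_{p,n}+D_{q,j}=D_{p,q}+D_{j,n}$, which follows since that maximum is attained twice and $D_{p,n}+D_{q,j}\ge D_{p,j}+D_{q,n}$ there). Finally, one recovers the p-treelike form by contracting zero-weight edges as above; the degenerate sub-cases ($\gamma=0$, or $x$ falling on an existing vertex) are routine and do not affect the structure of the argument.
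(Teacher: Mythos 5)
The paper does not actually prove Theorem \ref{Bune}: it is quoted as a classical result (Buneman, Simoes-Pereira, Zaretskii), so there is no internal proof to compare yours against. Your argument is the standard proof of this theorem: necessity via the two possible topologies of the Steiner tree of four points, and sufficiency by induction on $n$, grafting leaf $n$ at the point determined by the minimal Gromov product $\gamma=\min_{a,b}\tfrac12(D_{a,n}+D_{b,n}-D_{a,b})$. The key computation is right: minimality of $(p,q)$ makes $D_{p,q}+D_{j,n}$ the largest of the three sums for $\{p,q,j,n\}$, the four-point condition forces it to equal $D_{p,n}+D_{q,j}$ or $D_{p,j}+D_{q,n}$, and whichever equality holds corresponds exactly to the side of $x$ on which the median of $p,q,j$ falls, so the case split closes. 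Two small points deserve repair. First, your claim that pendant edges of a nonnegative-weighted realization must carry positive weight is false as stated (a zero-weight twig does not force two labels to be at distance $0$); the correct argument is simply to contract \emph{all} zero-weight edges and note that no two labels get identified because their pairwise distances are positive --- this yields a positive-weighted tree having $\{1,\dots,n\}$ among its vertices, which is all that ``p-treelike'' requires in this paper's terminology (the set $S$ need not consist of leaves). Second, when $x$ coincides with a labelled leaf $p$ (tight triangle inequality $D_{q,n}=D_{q,p}+D_{p,n}$), attaching $n$ there destroys $p$'s status as a leaf; to preserve nn-l-treelikeness you should re-hang $p$ on a zero-weight twig at that vertex. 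Both are routine fixes and do not affect the structure of your proof.
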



In \cite{B-S}, Bandelt and Steel proved a result, analogous to
Theorem \ref{Bune}, for general weighted trees; more precisely, they
 proved that, for any set of real numbers $\{D_{I}\}_{I \in {\{1,...,n\} \choose 2}}$,
 there exists a weighted tree ${\cal T}$ with leaves $1,...,n$
such that $ D_{I} ({\cal T})= D_{I}$  for any $I \in {\{1,...,n\} \choose 2}$  if and only  if, for any $a,b,c,d \in  \{1,...,n\}$, at least two among 
 $ D_{a,b} + D_{c,d},\;\;D_{a,c} + D_{b,d},\;\; D_{a,d} + D_{b,c}$
are equal.

For higher $k$ the literature is more recent. Some of the most important results are the following.

\begin{thm} \label{PatSp} {\bf (Pachter-Speyer,  \cite{P-S})}. Let $ k ,n  \in \mathbb{N}$ with
$3 \leq  k \leq (n+1)/2$.  A positive-weighted tree
 ${\cal T}$ with leaves $1,...,n$ and no vertices of degree 2
is determined by the values $D_I({\cal T})$, where $ I $ varies in  
${\{1,...,n\} \choose k }$.
\end{thm}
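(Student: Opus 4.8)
The plan is to reconstruct a positive-weighted tree from its $k$-weights in two stages: first recover the combinatorial type of $T$ (the set of bipartitions of $\{1,\dots,n\}$ cut out by the edges), then recover the edge weights. So one must show: if $\mathcal{T}=(T,w)$ and $\mathcal{T}'=(T',w')$ are positive-weighted trees with leaves $1,\dots,n$, no vertices of degree $2$, and $D_I(\mathcal{T})=D_I(\mathcal{T}')$ for every $I\in\binom{\{1,\dots,n\}}{k}$, then $(T,w)=(T',w')$.

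For the weights, suppose the abstract tree $T$ is already known and let $A_e\mid B_e$ be the split of the leaves induced by an edge $e$. From the definition one has $D_I(\mathcal{T})=\sum_{e\in E(T)}\varepsilon_{I,e}\,w(e)$, where $\varepsilon_{I,e}=1$ if $I$ meets both $A_e$ and $B_e$ and $\varepsilon_{I,e}=0$ otherwise; the numbers $\varepsilon_{I,e}$ depend only on $T$ and $I$. Hence $w\mapsto(D_I(\mathcal{T}))_I$ is linear, and I would complete this stage by checking that the $0/1$ matrix $(\varepsilon_{I,e})$ has rank $|E(T)|$: treating the pendant edges first (for the pendant edge at a leaf $i$ one has $\varepsilon_{I,e}=1$ iff $i\in I$) and then the internal edges from the outside inwards, a suitable family of $k$-subsets yields a full-rank submatrix as long as there are enough leaves to make the choices, which holds under $n\ge 2k-1$; for $k=2$ this is the classical fact that the leaf metric of a tree with no degree-$2$ vertex determines its edge weights.

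For the topology, I would use that a tree with labelled leaves and no degree-$2$ vertices is determined by the quartet (possibly unresolved) it induces on each $4$-subset of leaves, so it suffices to read each quartet off the numbers $D_I(\mathcal{T})$. When $k=3$ there is a shortcut: the minimal subtree on three leaves is a tripod, so $D_{xyz}(\mathcal{T})=\tfrac12\bigl(D_{xy}(\mathcal{T})+D_{xz}(\mathcal{T})+D_{yz}(\mathcal{T})\bigr)$, and therefore, for distinct $a,b,c,d$, $D_{abc}(\mathcal{T})+D_{abd}(\mathcal{T})-D_{acd}(\mathcal{T})-D_{bcd}(\mathcal{T})=D_{ab}(\mathcal{T})-D_{cd}(\mathcal{T})$; since $n\ge 5$, chaining such identities recovers every $D_{ij}(\mathcal{T})$ up to a single global additive constant, which is then fixed by one more tripod identity, after which Theorem~\ref{Bune} (or the Bandelt--Steel variant, \cite{B-S}) recovers $(T,w)$ directly.

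For $k\ge 4$ this device fails, because $D_I(\mathcal{T})$ is only \emph{piecewise} linear in the pairwise weights: the minimal subtree on $\ge 4$ leaves depends on the topology of $T$ restricted to $I$. Instead I would recover the quartet on $a,b,c,d$ by comparing the values $D_I(\mathcal{T})$ over $k$-subsets $I$ that extend the three pairings among $\{a,b,c,d\}$ by carefully chosen further leaves: for choices that place the extra leaves on the correct sides of the (still unknown) internal edge of the true quartet, the comparison collapses to the ordinary four-point condition, so, using positivity of the weights for strictness, it singles out the true pairing (or diagnoses a star). This means the topology has to be built up incrementally, quartet by quartet, and the number of extra leaves consumed by the comparison, together with how they must be spread over the internal edges, is exactly what forces $k\le (n+1)/2$. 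The hard part is making this bootstrap precise: showing that $n\ge 2k-1$ always leaves enough room and that the relevant comparisons are never tied. Granting that, the linear algebra of the weight stage finishes the proof.
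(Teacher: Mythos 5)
First, a point of reference: the paper does not prove Theorem \ref{PatSp} at all --- it is quoted from Pachter--Speyer \cite{P-S} --- so there is no internal proof to compare your attempt against, and I can only judge it on its own terms.

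Your two-stage plan (recover the topology, then invert the linear map from edge weights to $k$-weights) is the standard and correct shape, and the $k=3$ reduction is essentially complete: the identity $D_{abc}+D_{abd}-D_{acd}-D_{bcd}=D_{ab}-D_{cd}$ is correct and, with $n\ge 5$, does recover the pairwise distances up to a global additive constant, which one tripod equation then fixes. But for $k\ge 4$ --- the substance of the theorem --- what you write is a plan rather than a proof, and you say so yourself (``the hard part is making this bootstrap precise''). The unresolved difficulty is a circularity: you propose to detect the quartet on $\{a,b,c,d\}$ by padding with extra leaves ``placed on the correct sides of the (still unknown) internal edge,'' but one cannot place leaves relative to an edge that has not yet been reconstructed, and ``building the topology up incrementally, quartet by quartet'' is precisely the step that needs an argument. (The way this is made precise in the present paper's Proposition \ref{starbell}(2), and in spirit in \cite{P-S}, is to quantify existentially: $\langle i,j\,|\,x,y\rangle$ holds if and only if there \emph{exist} $R,S$ making the two four-point-type sums unequal, so no a priori knowledge of the edge is required; one must then prove both directions of that equivalence, which is where the nonvanishing of internal weights and the bound $n\ge 2k-1$ actually do their work.) Likewise, the full-rank claim for the matrix $(\varepsilon_{I,e})$ is asserted (``a suitable family of $k$-subsets yields a full-rank submatrix'') but no such family is exhibited; that is a more routine but still missing lemma. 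As it stands, the proposal settles $k=3$ modulo classical facts and leaves $k\ge 4$ open.
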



\begin{thm} {\bf  (Herrmann, Huber, Moulton, Spillner, \cite{H-H-M-S})}.
If $n \geq 2k$, a family  of positive real numbers 
$ \{D_{I}\}_{I \in {\{1,..., n\} \choose k}}$ is nn-ip-l-treelike 
if and only if the restriction to every $2k$-subset of $\{1,...,n\}$ is 
nn-ip-l-treelike.   
\end{thm}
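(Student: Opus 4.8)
\emph{Proof proposal.} The implication ``nn-ip-l-treelike $\Rightarrow$ every $2k$-subset nn-ip-l-treelike'' is the easy one. If ${\cal T}=(T,w)$ realizes $\{D_I\}$, is nonnegative-weighted and internal-positive-weighted, and $L(T)=\{1,\dots ,n\}$, then, given a $2k$-subset $U$, take the minimal subtree of $T$ spanning $U$ and suppress every vertex of degree $2$ not in $U$, adding up the weights of its two incident edges each time. The leaf set becomes $U$, all weights are sums of nonnegative ones hence nonnegative, and any internal edge $f$ of the result corresponds to a nonempty path in $T$ each edge $e$ of which separates $\{1,\dots ,n\}$ into two parts, each containing at least two elements of $U\subseteq L(T)$; such an $e$ is internal in $T$, so $w(e)>0$ and hence $w(f)>0$. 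So the restriction is realized by a nonnegative-weighted, internal-positive-weighted tree with leaf set $U$.

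For the converse, the plan is to induct on $n\ge 2k$. For $n=2k$ the only $2k$-subset is $\{1,\dots ,n\}$ and there is nothing to prove, so suppose $n\ge 2k+1$ and the statement holds for $n-1$. Every $2k$-subset of $\{1,\dots ,n\}\setminus\{j\}$ is one of $\{1,\dots ,n\}$, so by the inductive hypothesis each restriction to $\{1,\dots ,n\}\setminus\{j\}$ is nn-ip-l-treelike; in particular a nonnegative-weighted, internal-positive-weighted tree ${\cal T}$ with leaves $1,\dots ,n-1$ realizes $\{D_I\}_{I\subseteq\{1,\dots ,n-1\}}$. Assume first $k\ge 3$. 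Since $n\ge 2k$ we have $k\le n/2=((n-1)+1)/2$, so, after the harmless normalization of contracting the finitely many internal edges of ${\cal T}$ of weight $0$ (zero-weight pendant edges being irrelevant below), Theorem~\ref{PatSp} applied with $n-1$ leaves forces ${\cal T}$ to be the unique tree without degree-$2$ vertices realizing the family on $\{1,\dots ,n-1\}$; the same theorem, now with $2k-1$ leaves (note $k=((2k-1)+1)/2$), gives the same uniqueness for every subtree of ${\cal T}$ spanned by $2k-1$ of its leaves. It thus remains to locate leaf $n$: to produce a point $p$ of ${\cal T}$ and a weight $\ell\ge 0$ such that attaching $n$ to $p$ by a pendant edge of weight $\ell$ (subdividing the relevant edge of ${\cal T}$ if $p$ is interior to it, and then contracting any zero-weight internal edge so created) yields a tree ${\cal T}^{+}$ with $D_I({\cal T}^{+})=D_I$ for every $k$-subset $I$ of $\{1,\dots ,n\}$ and with positive internal edges. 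The case $k=2$ is the classical tree-metric situation: the characterizing four-point condition of Theorem~\ref{Bune} and of Bandelt--Steel is already a condition on $2k=4$ leaves at a time, so the statement follows directly from those results.

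To find $(p,\ell)$ I would read the attachment of $n$ off the $2k$-subsets containing it. For $U=\{n\}\cup J$ with $J\subseteq\{1,\dots ,n-1\}$, $|J|=2k-1$, the hypothesis gives a realizing tree ${\cal T}_U$ whose subtree on $J$, by the uniqueness above, equals the subtree $S_J$ of ${\cal T}$ on $J$; thus ${\cal T}_U$ is $S_J$ with $n$ attached, at some point of $S_J\subseteq{\cal T}$ and with some pendant weight $\ge 0$. The key step is then to show that there are a single point $p\in{\cal T}$ and a single $\ell\ge 0$ such that, for every such $U$ with $p\in S_J$, the tree ${\cal T}_U$ is $S_J$ with $n$ attached at $p$ with weight $\ell$; I would prove this by walking along a chain $U=W_0,\dots ,W_r=U'$ of $2k$-subsets in which consecutive members share $2k-1$ leaves (including $n$) and all contain leaves of ${\cal T}$ on both sides of the edge carrying $p$, invoking the $(2k-1)$-leaf uniqueness of Theorem~\ref{PatSp} on the overlaps and the connectedness of the associated swap-graph. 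Granting this, put ${\cal T}^{+}={\cal T}$ with $n$ attached at $p$ with weight $\ell$: its restriction to $\{1,\dots ,n-1\}$ is ${\cal T}$, and for a $k$-subset $I=\{n\}\cup K$ one enlarges $K$ to a $(2k-1)$-subset $J'$ of $\{1,\dots ,n-1\}$ with $p\in S_{J'}$, so that the restriction of ${\cal T}^{+}$ to $\{n\}\cup J'$ equals ${\cal T}_{\{n\}\cup J'}$ and hence $D_I({\cal T}^{+})=D_I$. That ${\cal T}^{+}$ is nonnegative-weighted is clear (subdivision splits a nonnegative weight into two nonnegative weights); its internal edges are positive because $p$ is never at a leaf of ${\cal T}$ (each ${\cal T}_U$ has $n$ as a genuine leaf) and a zero-weight internal piece produced by a subdivision can be contracted.

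The hard part will be the key step just described: showing that the ``position of $n$'' read off one $2k$-subset agrees with that read off another, so that $p$ and $\ell$ are well defined, and that the glued weights stay legal. I expect this to come down to a finite case analysis on the position of $p$ relative to pairs of edges of ${\cal T}$, driven by the uniqueness in Theorem~\ref{PatSp}; and it is exactly here that the hypothesis must be imposed on $2k$-subsets and not on smaller ones, since Pachter--Speyer uniqueness for $k$-weights first becomes available at $2k-1$ leaves, which makes $2k$ the sharp threshold.
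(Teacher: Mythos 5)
This statement is quoted from Herrmann--Huber--Moulton--Spillner \cite{H-H-M-S}; the paper gives no proof of it, so there is nothing internal to compare your argument against. Judged on its own, your write-up is an honest plan rather than a proof, and the gap sits exactly where you say it does: the well-definedness of the attachment point $p$ and pendant weight $\ell$ for the new leaf $n$. You reduce the converse to showing that the position of $n$ read off one $2k$-subset $U=\{n\}\cup J$ agrees with the position read off another, and you propose to get this from a chain of $2k$-subsets overlapping in $2k-1$ leaves together with ``connectedness of the associated swap-graph''; but neither the connectedness of that graph nor the local agreement across a single swap is established, and the local step is not automatic: two sets $J$, $J'$ with $|J\cap J'|=2k-2$ may fail to separate the candidate attachment edge in ${\cal T}|_{J\cap J'}$, in which case the $(2k-1)$-leaf uniqueness you invoke says nothing about where $n$ sits relative to the leaves outside $J\cap J'$. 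This is the entire content of the theorem (it is why $2k$ is the sharp threshold, as you note), so leaving it as ``a finite case analysis I expect to work'' means the proof is not there.

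Two further points need repair even granting the key step. First, Theorem \ref{PatSp} as stated applies to \emph{positive}-weighted essential trees, whereas your trees are only nonnegative-weighted with positive internal edges; a twig of weight $0$ is not removed by contracting zero-weight \emph{internal} edges (of which there are none by hypothesis), so you must either extend Pachter--Speyer to this setting or argue separately that zero pendant weights do not break uniqueness of the underlying topology. Second, in the case $k=2$ you appeal to Theorem \ref{Bune}, but that result only produces an nn-l-treelike realization; you still owe an argument that the realizing tree can be taken internal-positive (e.g.\ by contracting zero-weight internal edges, which you should say explicitly). The forward direction of your proposal, by contrast, is correct and complete as written.
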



 \begin{thm} {\bf (Levy-Yoshida-Pachter)} Let ${\cal T}=(T,w)$ be a positive-weighted tree 
  with $L(T)=\{1,\ldots,n\}$. For any $i ,j \in \{1,\ldots,n\}$, define $$S(i,j) = \sum_{Y \in {\{1,\ldots,n\} -\{i,j\} \choose k-2}} D_{i,j ,Y} ({\cal T}). $$ Then there exists a positive-weighted tree ${\cal T}' =(T',w')$   such that $D_{i,j}({\cal T}')=S(i,j)$  for all $i,j \in \{1,\ldots,n\}$,
  the quartet system of $T'$ contains  the quartet system of $T$ and, defined $T_{\leq s}$ the subforest of $T$  whose edge set consists of edges whose removal results in one of the components having size at most $s$, we have 
  $T_{\leq n-k} \cong T'_{\leq n-k}$. 
  \end{thm}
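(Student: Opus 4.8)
The idea is to compute $S(i,j)$ directly from the edge weights of $T$ and to recognise the result as the leaf‑distance of a tree that is (up to contracting some edges and changing weights) $T$ itself. For an edge $e$ of $T$ write $A_e\mid B_e$ for the bipartition of $\{1,\dots,n\}$ induced on the leaves by deleting $e$, and for $A\subseteq\{1,\dots,n\}$ let $\chi_A$ be the split pseudometric on pairs, equal to $1$ on a pair separated by $A$ and $0$ otherwise. Since the minimal subtree spanning a leaf set $Z$ uses exactly the edges $e$ with $Z\cap A_e\neq\emptyset$ and $Z\cap B_e\neq\emptyset$, we get $D_{i,j,Y}(\mathcal T)=\sum_e w(e)\,\mathbf 1[\,(\{i,j\}\cup Y)\ \text{meets both}\ A_e,B_e\,]$, hence $S(i,j)=\sum_e w(e)\,\alpha_e(i,j)$ where $\alpha_e(i,j)$ counts the $(k-2)$‑subsets $Y$ of $\{1,\dots,n\}\setminus\{i,j\}$ for which $\{i,j\}\cup Y$ spans $e$. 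The first step is the elementary count: with the convention $\binom m{k-2}=0$ for $m<k-2$, one has $\alpha_e(i,j)=\binom{n-2}{k-2}$ when $e$ separates $i$ and $j$, and $\alpha_e(i,j)=\binom{n-2}{k-2}-\binom{|A_e|-2}{k-2}$, resp.\ $\binom{n-2}{k-2}-\binom{|B_e|-2}{k-2}$, when $i,j$ both lie in $A_e$, resp.\ in $B_e$.

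The second step is a purely algebraic rearrangement. Putting $x^e_v=\mathbf 1[v\in A_e]$ and using $x^e_ix^e_j=\tfrac12\big(x^e_i+x^e_j-\chi_{A_e}(i,j)\big)$ for $i\neq j$, each $\alpha_e$ decomposes as (a constant) $+\,\big(\psi_e(i)+\psi_e(j)\big)\,+\,\lambda_e\,\chi_{A_e}$, with $\lambda_e:=\tfrac12 w(e)\big(\binom{|A_e|-2}{k-2}+\binom{|B_e|-2}{k-2}\big)\ge 0$. Summing over $e$, noting that the pendant edge $e_v$ at a leaf $v$ has $\chi_{A_{e_v}}=\chi_{\{v\}}$ and that a constant function on pairs equals $\sum_v(\mathrm{const}/2)\chi_{\{v\}}$, we obtain
\[
S(i,j)=\sum_{v=1}^n c_v\,\chi_{\{v\}}(i,j)+\sum_{e\ \text{internal}}\lambda_e\,\chi_{A_e}(i,j)
\]
for explicit reals $c_v$. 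The internal splits $A_e$ that occur are splits of $T$, so they are pairwise compatible, and singleton splits are compatible with everything; hence this is exactly the leaf‑distance $D_{i,j}(T',w')$ of the tree $T'$ obtained from $T$ by contracting the internal edges with $\lambda_e=0$ — equivalently those whose two leaf‑sides both have fewer than $k$ elements — with weight $c_v$ on the pendant edge at $v$ and weight $\lambda_e$ on the internal edge carrying split $A_e$.

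I expect the real work to be in showing $T'$ is \emph{positive}-weighted; its internal weights are positive by the choice of contractions, so one must prove $c_v>0$. Carrying the bookkeeping through, $c_v$ equals $\tfrac12\sum_e w(e)\big(\binom{n-2}{k-2}-\binom{|B_e|-2}{k-2}\big)+2\lambda_{e_v}-\sum_{e\ \text{internal},\,v\in A_e}\tfrac12 w(e)\big(\binom{|A_e|-2}{k-2}-\binom{|B_e|-2}{k-2}\big)$, where $A_e$ is oriented as the side with at least as many leaves. Because every edge has $|A_e|\le n-1$, we have $\binom{|A_e|-2}{k-2}<\binom{n-2}{k-2}$, so the subtracted sum is strictly dominated, summand by summand, by (part of) the first sum, which moreover contains further nonnegative terms; together with $\lambda_{e_v}>0$ this gives $c_v>0$. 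Hence $T'$ is a positive‑weighted tree with $D_{i,j}(T',w')=S(i,j)$ for all $i,j$, which is the first assertion.

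For the structural assertions, note that an edge $e$ of $T$ survives the contraction exactly when one component of $T-e$ has at most $n-k$ leaves, i.e.\ exactly when $e\in T_{\le n-k}$; moreover contracting edges does not change the split induced by a surviving edge, so the edges of $T'$ match those of $T_{\le n-k}$ with equal splits. The argument is cleanest — and the last two conclusions are then immediate — when $2k\le n+1$ (as in Theorem~\ref{PatSp}): in that range no internal edge has both leaf‑sides of size below $k$, nothing is contracted, $T'$ has the same underlying tree as $T$ and $T_{\le n-k}=T$, whence $T_{\le n-k}\cong T'_{\le n-k}$, and every quartet resolved by an edge of $T$ is resolved by the same (surviving) edge of $T'$, giving the inclusion of quartet systems. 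I would expect this bound, or a comparable one relating $n$ and $k$, to be among the hypotheses; the computation above is uniform in $n,k$, and only this last identification needs it.
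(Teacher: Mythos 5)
First, a caveat: the paper does not prove this statement at all --- it is quoted from the reference [L-Y-P] as background in the introduction --- so there is no in-paper proof to compare yours against, and your proposal must be judged on its own terms. On those terms, your argument for the first (and main) assertion is correct and is essentially the standard one: the count $\alpha_e(i,j)$ is right, the split decomposition of $S(i,j)$ is right, and the pendant weights are indeed positive. Your displayed formula for $c_v$ is garbled, though (the ``$2\lambda_{e_v}$'' term and the orientation-dependent subtracted sum do not come out that way); a cleaner bookkeeping gives
\[
c_v \;=\; \frac12\sum_{e\in E(T)} w(e)\left[\binom{n-2}{k-2}-\binom{|S_e(v)|-2}{k-2}\right]\;+\;\frac{w(e_v)}{2}\binom{n-3}{k-2},
\]
where $S_e(v)$ denotes the side of $e$ containing $v$. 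Since $|S_e(v)|\le n-1$, every summand of the first sum is already strictly positive, so $c_v>0$ with no comparison of sums needed. Hence the existence of a positive-weighted ${\cal T}'$ with $D_{i,j}({\cal T}')=S(i,j)$ is fully established.

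The genuine gap is in your last paragraph, and you have correctly sensed where it is. Your $T'$ is obtained from $T$ by contracting exactly the internal edges both of whose leaf-sides have fewer than $k$ elements, so the quartet system of $T'$ is \emph{contained in} that of $T$; the containment asserted in the statement goes the other way, and for your $T'$ it can only hold if nothing is contracted. That is automatic when $n\ge 2k-1$ (every internal edge then has a side with at least $k$ leaves), but it genuinely fails otherwise: for $n=6$, $k=5$ and $T$ two $3$-leaf cherries joined by one internal edge, your $T'$ is a star, its quartet system is empty while that of $T$ is not, and $T_{\le 1}$ (two disjoint $3$-stars) is not isomorphic to $T'_{\le 1}$ (one $6$-star). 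So, as transcribed here with no hypothesis relating $n$ and $k$, the structural assertions are false for the tree your (correct) construction produces; either a hypothesis such as $n\ge 2k-1$ is missing from the quoted statement, or the quartet containment should be reversed. Your proof of the structural claims is therefore complete only in the regime $n\ge 2k-1$, which you flag honestly, but you should not present the general case as done.
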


In \cite{Ru1} and \cite{Ru2},  the author 
gave an inductive characterization of the families of real numbers 
that are indexed by the subsets of $\{1,...,n\}$
 of  cardinality greater than or equal to $2$ and  
 are the families of  the multiweights of a tree with $n$ leaves.

Let $n ,k \in \N$ with $n >k$.
In  \cite{B-R}    we  studied the problem of the  characterization
of  the  families of positive real numbers, indexed by the $k$-subsets
 of an $n$-set, that are p-treelike in the ``border'' 
case $k=n-1$. 
Moreover we studied the analogous problem  for graphs.
See \cite{H-Y}  for other results on graphs and see the introduction of \cite{B-R} for a survey.

Here we examine  the case of  trees for general $k$. To illustrate the result, we need the following definition.

\begin{defin} Let $k\in \N- \{0\}$.  We say that  a tree $P$ is a {\bf pseudostar} of kind $(n,k)$ if $\# L(P)=n$ and
   any edge  of $P$ divides $L(P)$
 into two sets such that  at least one of them has cardinality   greater than or equal to $ k$.
\end{defin}

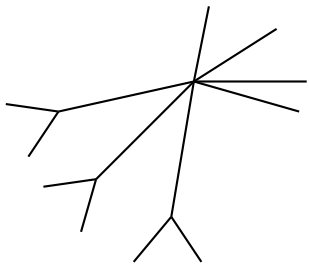
\begin{figure}[h!]
\begin{center}

\begin{tikzpicture}
\draw [thick] (0,0) --(0.2,1);
\draw [thick] (0,0) --(1.1,0.7);
\draw [thick] (0,0) --(1.5,0);
\draw [thick] (0,0) --(1.4,-0.4);
\draw [thick] (0,0) --(-1.8,-0.4);
\draw [thick] (-1.8,-0.4) --(-2.5,-0.3);
\draw [thick] (-1.8,-0.4) --(-2.2,-1);
\draw [thick] (0,0) --(-1.3,-1.3);
\draw [thick] (-1.3,-1.3) --(-2,-1.4);
\draw [thick] (-1.3,-1.3) --(-1.5,-2);
\draw [thick] (0,0) --(-0.3,-1.8);
\draw [thick] (-0.3,-1.8) --(-0.8,-2.4);
\draw [thick] (-0.3,-1.8) --(0.1,-2.4);

\end{tikzpicture}

\caption{ A pseudostar of kind $(10,8)$  \label{pseudostar}}
\end{center}
\end{figure}

In \cite{B-R2}
we proved that, if $3 \leq k \leq n-1$,  given a l-treelike family of  real numbers,  $\{ D_I\}_{ I  \in {\{1,...,n\} \choose k }}$, 
there exists exactly one internal-nonzero-weighted pseudostar ${\cal P}$ of kind $(n,k)$  with leaves $1,...,n$ and no vertices of degree 2  
such that  $D_I({\cal P})=D_I$ for any $ I  $. 
Here we associate to any  pseudostar of kind $(n,k)$ with  leaves $1,..., n$  
 a hierarchy on $\{1,...,n\}$ with clusters of cardinality between $2$ and $n-k$ and,
 by using this association and 
 by pushing forward the ideas in \cite{Ru1} and \cite{Ru2}, 
 we get a theorem  (Theorem \ref{char}) characterizing  l-treelike dissimilarity families;
consequently, we obtain also  a characterization of  p-l-treelike dissimilarity families (see Remark \ref{charpos}).



\section{Notation and recalls}

\begin{nota} \label{notainiziali}


$\bullet$ We use the symbols $\subset$ and $\subsetneq$  respectively for the inclusion and the strict inclusion.

$\bullet$  For any $n \in \N $ with $ n \geq 1$, let $[n]= \{1,..., n\}$.

$\bullet$  For any set $S$ and $k \in \mathbb{N}$,  let ${S \choose k}$
be the set of the $k$-subsets of $S$ and  let ${S \choose \geq k}$
be the set of the $t$-subsets of $S$ with $t \geq k$.

$\bullet$
For any  $A,B \subset [n]$, we will write $AB$ instead of $A \cup B$. Moreover, we will write $a,B$, or even $aB$, instead of $\{a\} \cup B$.

$\bullet$ Throughout the paper, the word ``tree'' will denote a finite  tree.

$\bullet$  We say that a vertex of a tree is a {\bf node} if its degree is greater than $2$.
  
$\bullet$  Let $F$ be a leaf of a tree $T$. Let $N$ be the node 
 such that the path $p$ between $N$ and $F$ does not contain any node apart from $N$. We say that $p$ is the {\bf twig} associated to $F$.
We say that an edge is {\bf internal} if it is not an edge of a twig.

$\bullet $ We say that a tree is {\bf essential} if it has no vertices of degree $2$.

$\bullet$ Let $T$ be a tree and let $S$ be a subset of $L(T)$. We denote by $T|_S$ the minimal subtree 
of $T$ whose set of vertices  contains $S$. If ${\cal T}= (T,w)$ is a weighted tree, we denote by 
${\cal T}|_S$  the tree $T|_S$ with the weight induced by $w$.

\end{nota}

\begin{defin} Let $T$ be a tree.

We say that two leaves  $i$ and $j$ of $T$ are {\bf neighbours}
if in the path from $i$ to $j$ there is only one node; 
furthermore, we say that  $C \subset L(T)$ is a {\bf cherry} if any $i,j \in C$ are neighbours.

We say that a cherry  is {\bf complete}
 if it is not strictly  contained in another cherry.

The {\bf stalk} of a cherry is the unique node in the path 
with endpoints any two elements of the cherry.

Let $C$ be a cherry in $T$.
We say that a tree $T'$ is obtained from $T$ by {\bf pruning} $C$ if it is obtained from
$T$ by ``deleting'' all the twigs associated to   leaves of $C$ (more precisely, by contracting all the edges of the twigs associated to leaves of $C$).

We say that a cherry $C$ in $T$ is {\bf good} if it is complete and, if $T'$ is the tree obtained from $T$ by pruning $C$, the stalk of $C$ is a leaf of $T'$.
We say that  a cherry is {\bf bad} if it is not good.

Let  $i,j,l,m \in L(T)$. We say that $ \langle
i, j | l, m \rangle $ holds if  in  $T|_{\{i,j,l,m\}}$ we have that $i$ and $j$ are neighbours, 
 $l$ and $m$ are neighbours, and  $i$ and $l$ are not neighbours; in this case we denote by  $\gamma_{i,j,l,m}$ the path 
between the stalk of $\{i,j\}$ and the stalk of $ \{l,m\}$ in $T|_{\{i,j,l,m\}}$. The symbol 
$ \langle i, j | l, m \rangle $  is called {\bf Buneman's index} of $i,j,l,m$. 

\end{defin}

{\bf Example.} In the tree in Figure \ref{good} the only good cherries are $\{1,2,3\}$ and $\{6,7\}$.

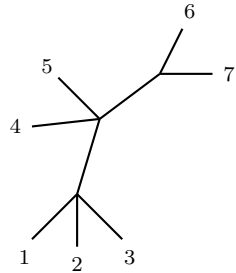
\begin{figure}[h!]
\begin{center}

\begin{tikzpicture}
\draw [thick] (0,0) --(0.8,0.6);
\draw [thick] (0.8,0.6)--(1.1,1.2);
\draw [thick] (0.8,0.6)--(1.5,0.6);
\draw [thick] (0,0) --(-0.3,-1);
\draw [thick] (0,0) --(-0.9,-0.1);
\draw [thick] (0,0) --(-0.55,0.55);
\draw [thick] (-0.3,-1)--(-0.9, -1.6);
\draw [thick] (-0.3,-1)--(-0.3, -1.7);
\draw [thick] (-0.3,-1)--(0.3, -1.6);
\node[above] at (1.2,1.2) {\scriptsize 6};
\node[right] at (1.5,0.6) {\scriptsize 7};
\node[left] at (-0.9,-0.1) {\scriptsize 4};
\node at (-0.7,0.7) {\scriptsize 5};
\node[below] at (-1, -1.6) {\scriptsize 1};
\node[below] at (-0.3, -1.7) {\scriptsize 2};
\node[below] at (0.4, -1.6) {\scriptsize 3};
\end{tikzpicture}

\caption{Good cherries and bad cherries} \label{good}
\end{center}
\end{figure}

\begin{rem}
(i) A pseudostar of kind $(n,n-1) $ is a star, that is, a tree with only one node.



(ii) Let $ k,n \in \N-\{0\}$. If $\frac{n}{2} \geq k $, then every tree 
with $n$ leaves is a pseudostar of kind $(n,k)$, in fact if we divide a set with $n$ elements into two 
parts, at least one of them has cardinality greater than or equal to $ \frac{n}{2}$, which 
is greater than or equal to $ k$. 
\end{rem}

\begin{defin} Let $S$ be  a set. We say that a set system ${\cal H}$ of $S$ is a {\bf hierarchy} over $S$
if, for any $ H, H' \in {\cal H}$, we have that $ H \cap H' $ is one among $ \emptyset, H, H'$. We say that ${\cal H}$ {\bf covers} $S$ if 
$S= \cup_{H \in {\cal H}} H$.
\end{defin}

\begin{defin} \label{hier} Let $k,n \in \N$ with $ 2 \leq k \leq n-2$.
Let $P$ be an essential pseudostar of kind $(n,k)$ with  $L(P)=[n]$. 

We define inductively $P^s$ for any $ s \geq 0 $ as follows: let $P^0=P$ 
and let $P^s$ be 
the tree obtained from $P^{s-1}$ by pruning all the good cherries of cardinality $ \leq n-k$.

We say that $x \in [n]$ descends from $y \in L(P^{s})$ for some $s$ if the path between $x$ and $y$ in $P$ 
contains no leaf of $P^s$ apart from $y$. For any $Y \subset L(P^{s})$, let $ \partial Y $ denote the 
subset of the elements of $[n]$ descending from any element of $Y$.

We  define   {\bf the hierarchy   ${\cal H}$  over $[n]$ associated to the pseudostar $P$}   (depending on $k$) as follows:

we say that a cherry $C$ of $P$ is in ${\cal H}$ if and only if $C$ is good and  $\# C \leq n-k$;

if $C$ is a cherry of $P^{s}$ for some $s$,
 we say that $ \partial C $ is in ${\cal H}$ if and only if $C$ is good and
 $\# \partial C \leq n-k$; 

if, for some $s$, we have that $L(P^{s})$ is the union of
two complete cherries, $C_1$ and $C_2$, and  both have cardinality less than or equal to $n-k$, we put in ${\cal H}$ only $\partial C_i$ for $i$ such that $\partial C_i$ contains the minimum of $\partial C_1 \cup \partial C_2$.

The elements of ${\cal H}$ are only the ones above. We call the elements of ${\cal H}$ ``${\cal H}$-clusters''.

For any $H \in {\cal H}$, we define $e_H $  as follows: let $H= \partial C$ for  some $C$ cherry of $P^{s}$;
we call $e_H$ the twig of $P^{s+1}$  associated to the stalk of $C$.
For any $ i \in [n]$, we call $e_i$ the twig associated to $i$. 

\end{defin}

Observe that  the set of the leaves of a  star is a bad cherry;  so, 
according to our definition of ${\cal H}$,
if for some $s \in \N$ we have that $P^s$ is a star,   we do not consider $\partial L(P^s) $, which is $[n]$, a cluster of  ${\cal H}$. 
So, for instance, the hierarchy of a star is empty.

\bigskip

{\bf Examples.} Let $P$ be the pseudostar of kind $(12,6)$ in Figure \ref{PH} (a). 
The associated  hierarchy over $[12]$ (with $k=6$) is
 $${\cal H}= \{  \{4,5,6\}, \{7,8,9\}, \{1,2,3,4,5,6\} \}. $$

Let $Q$ be the pseudostar  of kind $(10,5)$ in Figure \ref{PH} (b). 
The associated  hierarchy over $[10]$  (with $k=5$)  is
$${\cal H}= \{  \{1,2\}, \{3,4\}, \{1,2,3,4\}  \}.$$
Let $R$ be the pseudostar of kind $(12,5)$ in Figure \ref{PH} (c). 
The associated  hierarchy over $[12]$ is
 $${\cal H}= \{  \{3,4,5\}, \{6,7\}, \{8,9\},\{1,2,3,4,5\} \}.$$

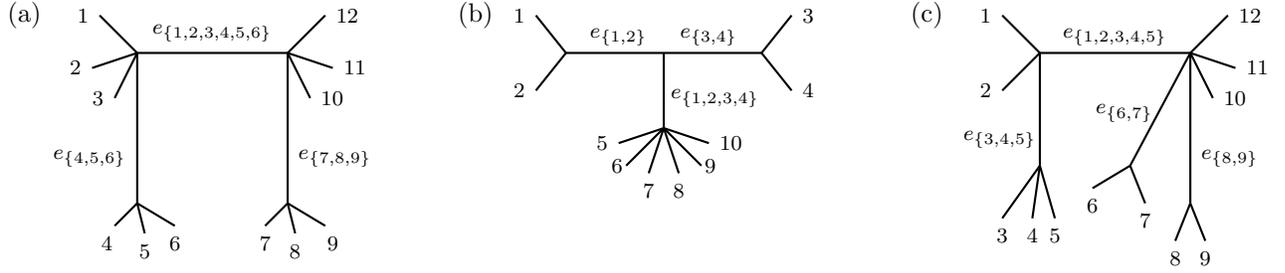
\begin{figure}[h!]

\begin{center}

\begin{tikzpicture}

\draw [thick] (-1.3,0) --(1.3,0) ; 
\draw [thick] (-1.3,0) --(-1.7,0.5);
\draw [thick] (1.3,0) --(1.7,0.5);
\draw [thick] (-1.3,0) --(-1.7,-0.5);
\draw [thick] (1.3,0) --(1.7,-0.5); 
\draw [thick] (0,0) --(0,-1.);
\draw [thick] (0,-1) --(-0.6,-1.2);
\draw [thick] (0,-1) --(-0.5,-1.5);
\draw [thick] (0,-1) --(-0.2,-1.6);
\draw [thick] (0,-1) --(0.6,-1.2);
\draw [thick] (0,-1) --(0.5,-1.5);
\draw [thick] (0,-1) --(0.2,-1.6);

\node [left] at (-1.7,0.5) {\scriptsize 1};
\node [left] at (-1.7,-0.5) {\scriptsize 2};
\node [right] at (1.7,0.5) {\scriptsize 3};
\node [right] at (1.7,-0.5) {\scriptsize 4};
\node [left] at (-0.6,-1.2) {\scriptsize 5};
\node [left] at (-0.4,-1.5) {\scriptsize 6};
\node [below] at (-0.2,-1.6) {\scriptsize 7};
\node [right] at (0.6,-1.2) {\scriptsize 10};
\node [right] at (0.4,-1.5) {\scriptsize{9}};
\node [below] at (0.2,-1.6) {\scriptsize{8}};
\node[above] at (-0.6,-0.05) {\scriptsize $e_{\{1,2\}}$};
\node[above] at (0.6,-0.05) {\scriptsize $e_{\{3,4\}}$};
\node[right] at (-0.05,-0.6) {\scriptsize $e_{\{1,2,3,4\}}$};

\draw [thick] (-7,0) --(-5,0) ; 
\draw [thick] (-7,0) --(-7.5,0.5);
\draw [thick] (-7,0) --(-7.6,-0.2);
\draw [thick] (-7,0) --(-7.3,-0.6);
\draw [thick] (-5,0) --(-4.5,0.5);
\draw [thick] (-5,0) --(-4.4,-0.2); 
\draw [thick] (-5,0) --(-4.7,-0.6); 
\draw [thick] (-7,0) --(-7,-2) ; 
\draw [thick] (-5,0) --(-5,-2);
\draw [thick] (-5,-2) --(-5.3,-2.3);
\draw [thick] (-5,-2) --(-4.9,-2.4);
\draw [thick] (-5,-2) --(-4.5,-2.3); 
\draw [thick] (-7,-2) --(-7.3,-2.3);
\draw [thick] (-7,-2) --(-6.9,-2.4);
\draw [thick] (-7,-2) --(-6.5,-2.3);

\node [left] at (-7.5,0.5) {\scriptsize{1}};
\node [left] at (-7.6,-0.2) {\scriptsize{2}};
\node [left] at (-7.3,-0.6) {\scriptsize{3}};
\node [right] at (-4.5,0.5) {\scriptsize 12};
\node [right] at (-4.4,-0.2) {\scriptsize 11};
\node [right] at (-4.7,-0.6) {\scriptsize 10};
\node [below] at (-4.4,-2.3) {\scriptsize{9}};
\node [below] at (-5.3,-2.3) {\scriptsize{7}};
\node [below] at (-4.9,-2.4) {\scriptsize{8}};
\node [below] at (-6.5,-2.3) {\scriptsize{6}};
\node [below] at (-7.4,-2.3) {\scriptsize{4}};
\node [below] at (-6.9,-2.4) {\scriptsize{5}};
\node[right] at (-5,-1.4) {\scriptsize $e_{\{7,8,9\}}$};
\node[left] at (-7,-1.4) {\scriptsize $e_{\{4,5,6\}}$};
\node[above] at (-6,0) {\scriptsize $e_{\{1,2,3,4,5,6\}}$};

\draw [thick] (5,0) --(7,0) ; 
\draw [thick] (7,0) --(7.5,0.5);
\draw [thick] (7,0) --(7.6,-0.2);
\draw [thick] (7,0) --(7.3,-0.6);
\draw [thick] (5,0) --(4.5,0.5);
\draw [thick] (5,0) --(4.5,-0.5); 
\draw [thick] (7,0) --(7,-2) ; 
\draw [thick] (5,0) --(5,-1.5);
\draw [thick] (5,-1.5) --(5.2,-2.2);
\draw [thick] (5,-1.5) --(4.9,-2.2);
\draw [thick] (5,-1.5) --(4.5,-2.2); 
\draw [thick] (7,-2) --(7.2,-2.5);
\draw [thick] (7,-2) --(6.8,-2.5); 
\draw [thick] (7,0) --(6.2,-1.5) ;
\draw [thick] (6.2,-1.5) --(5.7,-1.8) ;
\draw [thick] (6.2,-1.5) --(6.4,-2) ;

\node [left] at (4.5,0.5) {\scriptsize{1}};
\node [left] at (4.5,-0.5) {\scriptsize{2}};
\node [below] at (5.2,-2.2) {\scriptsize{5}};
\node [below] at (4.9,-2.2) {\scriptsize 4};
\node [below] at (4.5,-2.2) {\scriptsize 3};
\node [right] at (7.5,0.5) {\scriptsize 12};
\node [right] at (7.6,-0.2) {\scriptsize{11}};
\node [right] at (7.3,-0.6) {\scriptsize{10}};
\node [below] at (7.2,-2.5) {\scriptsize{9}};
\node [below] at (6.8,-2.5) {\scriptsize{8}};
\node [below] at (5.7,-1.8) {\scriptsize{6}};
\node [below] at (6.4,-2) {\scriptsize{7}};
\node[left] at (5.1,-1.15) {\scriptsize $e_{\{3,4,5\}}$};
\node[left] at (6.65,-0.8) {\scriptsize $e_{\{6,7\}}$};
\node[right] at (7,-1.4) {\scriptsize $e_{\{8,9\}}$};
\node[above] at (6,-0.05) {\scriptsize $e_{\{1,2,3,4,5\}}$};

\node at (-8.5,0.5) {\footnotesize (a)};
\node at (-2.5,0.5) {\footnotesize(b)};
\node at (3.5,0.5) {\footnotesize(c)};
\end{tikzpicture}

\caption{Pseudostars and hierarchies \label{PH}}
\end{center}
\end{figure}

\begin{rem} \label{fromhiertops}
It is easy to reconstruct the pseudostar $P$ from the hierarchy ${\cal H}$:

Let ${\cal H}$ be a hierarchy on $[n]$ such that its clusters have cardinality between $2$ and $n-k$.
Let us consider a star $B$ with $L(B) = [n] - \cup_{H \in {\cal H}} H$ and call $O$ its stalk. 
For any $M$ maximal element of ${\cal H}$, we add an edge $e_M$ with endpoint  $O$;
let $V_M$ be the other endpoint of $e_M$. Then we add a cherry with stalk $V_M$ and leaves $ M- \cup_{H \in {\cal H}, H \subsetneq M} H$ ; for every element $M'$ of ${\cal H}$ strictly contained in $M$ which is maximal among the elements of ${\cal H}$ strictly contained in $M$,  
we add an edge with endpoint $V_M$ and 
we call $V_{M'}$ the other endpoint and so on. When we arrive at a minimal element $N$ 
of ${\cal H}$, we add a cherry with stalk $V_N$ and  set of leaves $ N$.

\end{rem}

{\bf Example.} Let $r=6$. Consider  the following hierarchy over $[12]$:
 $${\cal H} =\{\{1,2,3,4,5,6\}, \{4,5,6\},
\{7,8,9,10\},  \{7,8\}, \{9,10\}
\}.$$ The associated $6$-pseudostar is the one in Figure \ref{hierpseudo}, in fact: $L(B)=\{11,12\}$, the maximal elements of ${\cal H}$ are $\{1,2,3,4,5,6\}$ and  $\{7,8,9,10\}$; for $M= \{1,2,3,4,5,6\} $, the set  
$ M- \cup_{H \in {\cal H}, H \subsetneq M} H$ is $\{1,2,3\}$ and the only  element of ${\cal H}$ strictly contained in $M$ is $\{4,5,6\}$, which is minimal in ${\cal H}$; 
for   $M= \{7,8,9,10\}$   the set  $ M- \cup_{H \in {\cal H}, H \subsetneq M} H$ is empty and the only  elements of ${\cal H}$ strictly contained in $M$ are $\{7,8\}$ and $\{9,10\}$, which are minimal in ${\cal H}$.

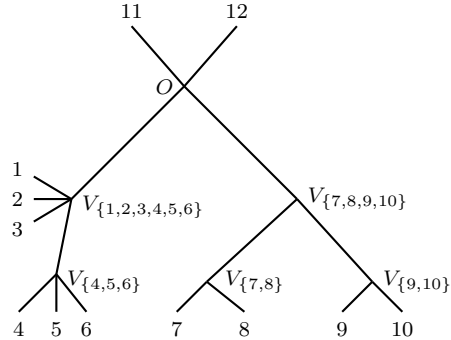
\begin{figure}[h!]
\begin{center}

\begin{tikzpicture}

\draw [thick] (0.5,0) --(-0.2,0.8);
\draw [thick] (0.5,0) --(1.2,0.8); 
\draw [thick] (0.5,0) --(-1,-1.5);
\draw [thick] (0.5,0) --(2,-1.5);
\draw [thick] (-1,-1.5) --(-1.5,-1.5);
\draw [thick] (-1,-1.5) --(-1.5,-1.2);
\draw [thick] (-1,-1.5) --(-1.5,-1.8);
\draw [thick] (-1,-1.5) --(-1.2,-2.5);
\draw [thick] (-1.2,-2.5) --(-1.7,-3);
\draw [thick] (-1.2,-2.5) --(-1.2,-3);
\draw [thick] (-1.2,-2.5) --(-0.8,-3);
\draw [thick] (2,-1.5) --(3,-2.6);
\draw [thick] (2,-1.5) --(0.8,-2.6);
\draw [thick] (3,-2.6) --(2.6,-3);
\draw [thick] (3,-2.6) --(3.4,-3);
\draw [thick] (0.8,-2.6) --(1.3,-3);
\draw [thick] (0.8,-2.6) --(0.4,-3);

\node[left] at (0.5,0) {\scriptsize $O$};
\node[right] at (-1,-1.6) {\scriptsize $V_{\{1,2,3,4,5,6\}}$};
\node[right] at (2, -1.5) {\scriptsize $V_{\{7,8,9,10\}}$};
\node[right] at (-1.2, -2.6) {\scriptsize $V_{\{4,5,6\}}$};
\node[right] at (0.9, -2.6) {\scriptsize $V_{\{7,8\}}$};
\node[right] at (3, -2.6) {\scriptsize $V_{\{9,10\}}$};
\node [left] at (-1.5,-1.5) {\scriptsize 2};
\node [left] at (-1.5,-1.1) {\scriptsize 1};
\node [left] at (-1.5,-1.9) {\scriptsize 3};

\node at (-0.2,1) {\scriptsize 11};
\node at (1.2,1) {\scriptsize 12};
\node [below] at (-1.7,-3) {\scriptsize 4};
\node [below] at (-1.2,-3) {\scriptsize 5};
\node [below] at (-0.8,-3) {\scriptsize 6};
\node [below] at (0.4,-3) {\scriptsize 7};
\node [below] at (1.3,-3) {\scriptsize 8};
\node [below] at (2.6,-3) {\scriptsize 9};
\node [below] at (3.4,-3) {\scriptsize 10};

\end{tikzpicture}

\caption{How to recover the pseudostar from the hierarchy \label{hierpseudo}}
\end{center}
\end{figure}


\medskip

We report now two results (Proposition \ref{starbell}  and Theorem \ref{main}) we proved in \cite{B-R2}, because we need them in the proof of our main result  (Theorem \ref{char}).

\begin{prop} \label{starbell}
  Let $k,n \in \N$ with $ 2 \leq k \leq n-2$.
Let ${\cal P}=(P,w)$ be a weighted tree with $L(P)=[n]$. 

 \smallskip
 
 1)  Let $i,j \in [n]$.  

(1.1) If $i,j $ are neighbours,  then 
$ D_{i,X} ({\cal P}) - D_{j,X} ({\cal P}) $ does not depend on  $ 
X \in {[n] - \{i,j\} \choose  k-1}$.

(1.2) If ${\cal P}$ is  an internal-nonzero-weighted essential  pseudostar of kind $(n,k)$, then also the converse is true.

\smallskip

2)  Let $i,j, x,y \in [n]$. 
Let $ k \geq 4 $ and
 ${\cal P}$  be  an internal-nonzero-weighted essential pseudostar of kind $(n,k)$.  We have that $\langle i,j | x,y  \rangle$ holds if and only if at least one of the following conditions holds:

(a) $\{i,j\} $ and $\{x,y\}$ are complete cherries in $P$,

(b) there exist $S, R \in { [n]- \{i,j,x,y\}  \choose k-2}$ such that  
\begin{equation} \label{dis} D_{i,j,S} ({\cal P})  + D_{x,y,S}({\cal P}) \neq  D_{i,x,S} ({\cal P})  + D_{j,y,S}({\cal P}) .\end{equation}
\begin{equation} \label{dis2} D_{i,j,R} ({\cal P})  + D_{x,y,R}({\cal P}) \neq  D_{i,y,R} ({\cal P})  + D_{j,x,R}({\cal P}) .\end{equation}


\end{prop}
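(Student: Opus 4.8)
The two halves are essentially independent, so the plan is to treat them in turn. For part 1, statement (1.1) is a direct computation: if $i$ and $j$ are neighbours with common stalk $N$, then for any $X \subset [n]-\{i,j\}$ the minimal subtree realizing $D_{i,X}$ is obtained from the one realizing $D_{j,X}$ by deleting the twig $e_j$ and adjoining the twig $e_i$ (both twigs meet the rest of $T|_{\{i,j\}\cup X}$ at $N$, and $N \in T|_{X}$ is forced once $\#X \ge 1$ and $i,j$ are leaves). Hence $D_{i,X}(\mathcal P)-D_{j,X}(\mathcal P) = w(e_i)-w(e_j)$, independent of $X$. For the converse (1.2), I would argue by contrapositive: suppose $i,j$ are \emph{not} neighbours in an essential pseudostar $\mathcal P$ of kind $(n,k)$. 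Then there is an internal edge $\varepsilon$ on the path from $i$ to $j$; since $\mathcal P$ is a pseudostar, $\varepsilon$ splits $L(P)$ into two parts each of size $\ge \min$ with one side of size $\ge k$, and one needs to exhibit two $(k-1)$-subsets $X, X'$ of $[n]-\{i,j\}$ for which the difference $D_{i,X}-D_{j,X}$ changes. Choosing $X$ entirely on $j$'s side of some internal edge versus $X'$ entirely on $i$'s side makes the path from $i$ (resp. $j$) to $T|_X$ pick up or drop the internal edge weights along the $i$–$j$ path differently; since those internal weights are nonzero, the two differences disagree. The pseudostar hypothesis is exactly what guarantees each relevant side has at least $k-1$ available leaves to form such an $X$; this balancing of cardinalities is the one genuinely delicate bookkeeping point.

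For part 2, the ``if'' direction splits according to the two cases. If (a) holds, then $\{i,j\}$ and $\{x,y\}$ are complete cherries, so trivially $\langle i,j\mid x,y\rangle$ holds in $T|_{\{i,j,x,y\}}$ unless the two cherries share their stalk — but then all four would lie in one cherry, and I would rule this out (or note it forces the degenerate star case, handled separately). If (b) holds: fix $S$ with \eqref{dis}. The quantity $D_{i,j,S}+D_{x,y,S}-D_{i,x,S}-D_{j,y,S}$ is, by the standard four-point/Buneman computation applied inside $T|_{\{i,j,x,y\}\cup S}$, a signed sum of weights of edges on the ``internal'' path, and it is nonzero precisely when the split of $\{i,j,x,y\}$ induced by $T$ is not the split $ij\mid xy$ being tested against — more carefully, a nonzero value of \eqref{dis} rules out the topology $\langle i,x\mid j,y\rangle$, and a nonzero value of \eqref{dis2} rules out $\langle i,y\mid j,x\rangle$; the only remaining quartet topology on $\{i,j,x,y\}$ is $\langle i,j\mid x,y\rangle$ (the star/degenerate quartet being covered by case (a) via neighbour-ness), which is therefore forced. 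So the core of the ``if'' direction is: translate \eqref{dis} and \eqref{dis2} into statements about induced quartet splits via the four-point lemma, then eliminate the two competing topologies.

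For the ``only if'' direction of part 2, assume $\langle i,j\mid x,y\rangle$ holds. If in addition $\{i,j\}$ and $\{x,y\}$ are complete cherries in $P$ we are in case (a) and done, so assume not. Then at least one of $\{i,j\}$, $\{x,y\}$ — say $\{i,j\}$ — fails to be a complete cherry, meaning there is a node strictly between the stalk of $\{i,j\}$ and $i$ or $j$, \emph{or} the stalk of $\{i,j\}$ carries a further branch; in the essential-pseudostar setting this gives room on the tree to append a $(k-2)$-set $S$ (and a possibly different $R$) of leaves positioned so that the induced quartet on $\{i,j,x,y\}$ together with $S$ (resp. $R$) still has split $ij\mid xy$ but with a strictly positive contribution from the separating internal edge, forcing \eqref{dis} (resp. \eqref{dis2}) to be a strict inequality because the weight of that internal edge is nonzero. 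The hypothesis $k \ge 4$ is used so that $k-2 \ge 2$, giving enough leaves to place on the correct side; and $n \ge k+2$ guarantees the ambient leaf set is large enough. I expect the main obstacle to be precisely this last step: showing that one can always \emph{choose} $S$ and $R$ with the required placement relative to the $i,j,x,y$-quartet while respecting the pseudostar's cardinality constraints, and verifying that the sign of the resulting four-point expression is controlled (nonzero) rather than accidentally cancelling — this is where the internal-nonzero-weighted hypothesis and a careful case analysis on where the stalks of $\{i,j\}$ and $\{x,y\}$ sit must be combined. Everything else reduces to the four-point lemma and the elementary ``twig swap'' identity from part 1.
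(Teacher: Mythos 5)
First, a caveat: this paper contains no proof of Proposition \ref{starbell} --- it is imported from \cite{B-R2} and stated here only because it is needed for Theorem \ref{char} --- so there is no internal proof to compare your plan against, and I can only judge it on its merits. Part (1.1) is fine: the subtrees realizing $D_{i,X}$ and $D_{j,X}$ coincide outside the paths from $i$ and from $j$ to their common node, so the difference is $X$-independent. The difficulties start where signed weights enter.

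The recurring gap is that you repeatedly deduce ``nonzero'' from ``sum of nonzero internal edge weights''. The weights are arbitrary reals, so such a sum can vanish; the internal-nonzero hypothesis controls single edges only. In (1.2) your choice of $X$ on $j$'s side and $X'$ on $i$'s side produces a discrepancy equal, after the twig terms cancel, to a signed sum over \emph{all} the internal edges between the nodes nearest to $i$ and to $j$, which may be $0$; one must isolate a single internal edge, e.g.\ by letting $T|_{X}$ and $T|_{X'}$ absorb all of that path except one edge, and the pseudostar condition is what supplies enough leaves in the right components to do so. The same issue undermines part 2: for a generic $S$ the quantity in (\ref{dis}) equals, up to twig corrections, the weight of the whole path $\gamma_{i,j,x,y}$, which can vanish even when $\langle i,j\mid x,y\rangle$ holds, so exhibiting $S$ and $R$ again requires positioning $T|_S$ and $T|_R$ so as to capture a single edge. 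Moreover the four-point bookkeeping is not the two-leaf Buneman computation: here one has $D_{i,j,S}+D_{x,y,S}-D_{i,x,S}-D_{j,y,S}=w(p_i\cap p_x)+w(p_j\cap p_y)-w(p_i\cap p_j)-w(p_x\cap p_y)$, where $p_a$ denotes the path from $a$ to $T|_S$. Carrying this out shows that under $\langle i,x\mid j,y\rangle$ it is (\ref{dis2}) that vanishes for \emph{every} $R$ (both of its pairings are ``wrong''), while (\ref{dis}) can well be nonzero; you have the two conditions eliminating the two rival topologies swapped, which indicates the computation was not actually done. Finally, the degenerate quartet is not ``covered by case (a)'': when $T|_{\{i,j,x,y\}}$ is a star, (a) fails and one must verify directly that (b) fails too; and in the ``only if'' direction the role of alternative (a) --- covering exactly those configurations in which no admissible $S,R$ can be placed --- is asserted rather than established. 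These are precisely the points where $k\ge 4$, $k\le n-2$ and the pseudostar structure must be used, and the plan does not yet engage with them.
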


\begin{thm} \label{main} Let $n, k \in \N$ with $ 3 \leq k \leq n-1$.
Let $\{D_I\}_{I \in {[n] \choose k}}$ be a  family of  real numbers.
If it is l-treelike, then there exists exactly  one internal-nonzero-weighted
essential pseudostar ${\cal P} $ of kind $(n,k)$ realizing the family.
If the family $\{D_I\}_{I \in {[n] \choose k}}$ is  
p-l-treelike, then  ${\cal P} $ is positive-weighted.
\end{thm}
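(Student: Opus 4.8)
\textbf{Proof proposal for Theorem \ref{main}.}

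The plan is to prove existence and uniqueness separately, building everything on the earlier results. Assume $\{D_I\}_{I \in \binom{[n]}{k}}$ is l-treelike, so by definition there is a weighted tree ${\cal T}=(T,w)$ with $S \subset L(T)$, $\#S=n$ (relabel so $S=[n]$), and $D_I({\cal T})=D_I$ for every $k$-subset $I$ of $[n]$. First I would reduce to the case where $T$ itself has $[n]$ as its full leaf set: replace $T$ by $T|_{[n]}$, which only inherits and does not change the $k$-weights $D_I({\cal T})$ for $I \subset [n]$; then suppress degree-$2$ vertices to get an essential weighted tree ${\cal T}'=(T',w')$ with $L(T')=[n]$ still realizing the family. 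So without loss of generality the family is realized by an essential weighted tree with leaf set exactly $[n]$.

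For \emph{existence}, the idea is to contract each internal edge of $T'$ whose removal splits $[n]$ into two parts both of size $\geq k+1$ (equivalently, keep exactly those internal edges for which at least one side has $\leq k-1$ leaves on it; note a twig always has $n-1 \geq k$ leaves on one side, so twigs survive). Contracting an edge means adding its weight into... no --- here we must be careful: contraction should redistribute weight so that the $k$-weights are preserved. The key observation is that an internal edge $e$ of $T'$ lies in the subtree $T'|_I$ realizing $D_I$ \emph{for every} $k$-subset $I$ precisely when both sides of $e$ carry at most $k-1$ leaves; if some side carries $\leq k-1$ leaves and the other carries $\geq k$... one checks it lies in $T'|_I$ for all $I$ as well unless one side has exactly... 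Let me restate: $e$ fails to lie in some $T'|_I$ iff one can choose all $k$ indices of $I$ on one side of $e$, i.e. iff one side has $\geq k$ leaves. So $e$ lies in every $T'|_I$ iff both sides have $\leq k-1$ leaves, which cannot happen since $n \geq k+1$ forces one side $\geq \lceil n/2 \rceil$. Hence for \emph{every} internal edge, there is at least one $k$-subset whose realizing subtree omits it. This is exactly the pseudostar condition once we have an essential tree that still realizes the family: I claim ${\cal T}'$ is \emph{already} a pseudostar of kind $(n,k)$ after the reduction, because any edge splitting $[n]$ into two parts each of size $\leq k-1$ would be redundant in a sense that contradicts essentiality --- more precisely, an essential tree realizing the family whose every internal edge had both sides $\leq k-1$ would have $n \leq 2(k-1) < 2k$, but we are told $k \leq n-1$ only, so this needs the mild additional input that one can always \emph{make} both sides of every internal edge have at least $k$ leaves on one side by contracting the bad edges. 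Contracting a bad internal edge $e$ (both sides $\leq k-1$, impossible when $n>2k-2$) — so for $n \geq 2k-1$ there are no bad edges and ${\cal T}'$ is automatically a pseudostar; for $k \leq n \leq 2k-2$ one argues that every tree with $n$ leaves is a pseudostar of kind $(n,k)$ by Remark on $n/2 \geq k$... wait that needs $n \geq 2k$. The genuinely delicate range is $n$ slightly below $2k$; there I would contract each internal edge with both sides $\leq k-1$, pushing its weight onto an adjacent surviving edge, and verify directly that no $D_I$ changes because such an edge lies in $T'|_I$ for every $I$ (both sides too small to absorb all $k$ indices), so its weight is a constant added to every $D_I$ — no, that's still wrong since it's not added to $D_I$ for $I$ not containing... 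Actually if both sides have $\leq k-1$ leaves then $every$ $k$-subset $I$ must use leaves from both sides, so $e \in T'|_I$ always; contracting $e$ and adding $w'(e)$ to any single neighboring edge then changes $D_I$ by $0$ for those $I$ using that neighbor too, but not uniformly — so instead I would \emph{keep} $e$'s weight by observing these edges are forced and simply note the resulting tree after contracting all of them is still essential and now a pseudostar, with the induced weights still realizing the family by the ``forced edge'' bookkeeping. This bookkeeping is the main obstacle and I expect it to be the technical heart: showing the contraction of ``both-sides-small'' internal edges preserves all $k$-weights. Finally, since the family is l-treelike, it is in particular treelike over the leaf set, and Proposition \ref{starbell}(1.2) or the cited result from \cite{B-R2} upgrades the realizing pseudostar to an \emph{internal-nonzero-weighted} one (a zero internal weight would make two neighbouring clusters indistinguishable in a way contradicting the data, or one absorbs a zero internal edge into a degree-$2$ suppression — but that would violate essentiality, so instead one perturbs; the cleanest route is to invoke the existence half of the theorem from \cite{B-R2} quoted in the introduction, which already gives an internal-nonzero-weighted essential pseudostar).

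For \emph{uniqueness}, suppose ${\cal P}_1=(P_1,w_1)$ and ${\cal P}_2=(P_2,w_2)$ are two internal-nonzero-weighted essential pseudostars of kind $(n,k)$ with leaves $[n]$ both realizing the family. First I would show $P_1 \cong P_2$ as trees (combinatorial type). By Proposition \ref{starbell}(1.1)–(1.2), the relation ``$i,j$ are neighbours in $P_m$'' is detected by the family alone: $i,j$ are neighbours iff $D_{i,X}-D_{j,X}$ is independent of $X \in \binom{[n]-\{i,j\}}{k-1}$, and this criterion is the same for both trees. Hence ${\cal P}_1$ and ${\cal P}_2$ have the same cherries, same complete cherries, same stalks-as-leaves-after-pruning data; iterating (pass to $P_m^1$, $P_m^2$, etc., noting the induced families on the pruned trees again agree), the whole hierarchy ${\cal H}$ over $[n]$ is recovered from the family, and by Remark \ref{fromhiertops} the pseudostar is reconstructed from ${\cal H}$, so $P_1 \cong P_2$. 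For $k \geq 4$ one can alternatively use Proposition \ref{starbell}(2): the Buneman indices $\langle i,j|x,y\rangle$ are determined by the family, and a tree is determined by its quartet system (Buneman), giving $P_1 \cong P_2$ directly. Once the trees are identified, identify the weights: for each edge $e$ of the common tree $P$, choose a $k$-subset $I$ with $e \in P|_I$ and express $w(e)$ as a $\pm$-combination of finitely many $D_I$'s (standard ``four-point''-type linear solving on the pseudostar, using that every internal edge is genuinely present in some $P|_I$ because of the pseudostar property and that twigs are handled by $D_{i,X}-D_{j,X}$ differences as in (1.1)); since the $D_I$ agree, $w_1=w_2$. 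I expect the weight-recovery formulas themselves to be routine linear algebra on the tree metric; the real work is organizing the combinatorial reconstruction of ${\cal H}$ cleanly, which is where I would spend the bulk of the argument. The positive-weighted addendum follows at once: if the family is p-l-treelike, run the existence argument starting from a positive-weighted realizing tree; the contractions above only merge positively-weighted pieces so all surviving edges keep positive weight, and by uniqueness the (unique) pseudostar ${\cal P}$ must coincide with this positive-weighted one, hence ${\cal P}$ is positive-weighted. $\qed$
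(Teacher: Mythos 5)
A preliminary remark: this paper does not actually prove Theorem \ref{main}; it is imported verbatim from \cite{B-R2} (``We report now two results \dots we proved in \cite{B-R2}''), so there is no in-paper proof to compare against, and your proposal has to stand on its own. On uniqueness, your outline (recover the neighbour relations and Buneman indices from the family via Proposition \ref{starbell}, rebuild the hierarchy and hence the tree as in Remark \ref{fromhiertops}, then solve linearly for the weights) is consistent with the machinery this paper later builds around the theorem (Remark \ref{buoniono}, Definition \ref{fam-hier}, and the explicit weight formulas (\ref{w_V}) and (\ref{w_i}) in the proof of Theorem \ref{char})), though you leave two boundary issues unaddressed: Proposition \ref{starbell}(2) is only stated for $k\geq 4$ while the theorem starts at $k=3$, and the whole proposition requires $k\leq n-2$, so the case $k=n-1$ (where the pseudostar is a star) needs a separate, if easy, argument.

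The genuine gap is in existence, at exactly the point you flag as ``the main obstacle'' and then fail to resolve. You correctly reduce to an essential weighted tree with leaf set $[n]$, correctly identify that the only obstruction to the pseudostar property is an internal edge $e$ both of whose sides carry at most $k-1$ leaves, and correctly observe that such an $e$ lies in $T'|_I$ for \emph{every} $k$-subset $I$ --- and also that contracting $e$ and dumping $w'(e)$ onto a single neighbouring edge does \emph{not} preserve the $D_I$. But the fix is available from your own observation: since $e$ contributes $w'(e)$ to every $D_I$, contract $e$ and add $w'(e)/k$ to each of the $n$ twigs; then every $D_I$ regains $\sum_{j\in I}w'(e)/k=w'(e)$, the tree stays essential, the remaining edges' leaf-partitions are unchanged so the result is a pseudostar, and positivity is preserved for the p-l-treelike addendum. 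Without this (or an equivalent) redistribution your existence argument does not close, and your proposed fallback --- ``invoke the existence half of the theorem from \cite{B-R2}'' --- is circular, since that theorem \emph{is} Theorem \ref{main}. Your handling of the internal-nonzero condition is also wrong as written: ``one perturbs'' would change the realized family; the correct move is simply to contract any zero-weight internal edge, which alters no $D_I$ and preserves both essentiality and the pseudostar property.
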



\section{Characterization of treelike families}

In \S2 we established a   relation between pseudostars and hierarchies.
In this section, firstly we associate to any   family of real numbers $\{D_I\}_{I \in {[n] \choose k}}$ a hierarchy (see Definition  \ref{fam-hier}) in such a way that, if the family is the family of $k$-weights of a pseudostar, the hierachy associated to the pseudostar and the one associated to the family coincide. 
Then, in the main theorem (Theorem \ref{char}), 
 we give necessary and 
sufficient conditions for  a family   $\{D_I\}_{I \in {[n] \choose k}}$ to be l-treelike and these conditions involve  the  hierarchy associated to the family.

\begin{rem} \label{buoniono}
Let $T$ be a tree with $L(T)=[n]$. Let $C \subset [n]$.

\begin{enumerate}\itemsep0.1cm 
\item[a] By definition, we have that $C$ is a cherry if and only if, for any $i, j \in C$, $i$ and $j$ are neighbours, and this is true if and only if, for any $i, j \in C$,    
there do not exist $x, y \in [n]- \{i,j\}$ such that $\langle i,x | j , y \rangle $ holds; 
 
 \item[b]  Let $C$ be a complete cherry. Let $i,j \in C$. Then $C$ is good if and  only 
if, for any $ x,y \in [n]-C$, we have that $\langle i,j| x,y \rangle$ holds.
\end{enumerate}

Let $r \in \N $ and  let us define $T^0 = T$ and 
$T^s$ to be the tree obtained from $T^{s-1}$ by pruning the  good cherries of 
cardinality less or equal than $r$. If $J$ is a good cherry of $T^s$, we denote the stalk of $J$, which is a leaf of $ T^{s+1}$, by $\lceil  min (J) \rceil $.
Let $C \subset L( T^{s})$.

\begin{enumerate}\itemsep0.1cm 
\item[c] By definition, we have that $C$   is a cherry of $ T^{s}$  
if and only if, for any $\lceil i \rceil , \lceil j \rceil  \in C$, $\lceil i \rceil$ and $\lceil j \rceil$ are neighbours, and this is true if and only if, for any $\lceil i \rceil , \lceil  j   \rceil \in C$,    
there do not exist $\lceil x \rceil, \lceil  y \rceil\in L (T^{s}) - \{\lceil i \rceil , \lceil j \rceil \}$ such that $\langle \lceil i\rceil, \lceil x \rceil | \lceil  j \rceil ,\lceil  y \rceil\rangle $ holds. This is true if and only if, for any $\lceil i \rceil , \lceil  j   \rceil \in C$,    
there do not exist $\lceil x \rceil, \lceil  y \rceil\in L (T^{s}) - \{\lceil i \rceil , \lceil j \rceil \}$ such that $\langle i,  x |   j  ,  y \rangle $ holds. 

\item[d]  Let $C$ be a complete cherry of $T^s$. Let $ \lceil i \rceil, \lceil j  \rceil \in C$. Then $C$ is good if and  only 
if, for any $ \lceil x \rceil , \lceil y \rceil \in  L (T^{s}) -C$, we have that $\langle\lceil i \rceil , \lceil j
\rceil | \lceil x \rceil, \lceil y \rceil \rangle$ holds. This is true if and only if 
for any $ \lceil x \rceil , \lceil y \rceil \in  L (T^{s}) -C$, we have that $\langle  i  ,  j
 |  x , y  \rangle$ holds.
\end{enumerate}
Resuming, 

a) Let $C \subset [n]$;
 
$ C \mbox{ is a cherry } \Longleftrightarrow \forall \, i,j \in C, \not\exists \,
x, y \in [n]- \{i,j\} \mbox{ such that } \langle i,x | j , y \rangle  \mbox{ holds.}
$ 

b) Let $C$ be a complete cherry;

$ C \mbox{ is good }  \Longleftrightarrow \forall \, i,j \in C,
\forall \, x,y \in [n]-C, \mbox{  we have that } \langle i,j| x,y \rangle
\mbox{ holds}. $

c) Let $C \subset L( T^{s})$;

$C \mbox{   is a cherry of } T^{s}   \Longleftrightarrow 
 \forall \, \lceil i \rceil , \lceil  j   \rceil \in C,    
\not \exists \lceil x \rceil, \lceil  y \rceil\in L (T^{s}) - \{\lceil i \rceil , \lceil j \rceil \} \mbox{ such that } \langle i,  x |   j  ,  y \rangle \mbox{ holds.}$

d) Let $C$ be a complete cherry of $T^s$;

$C \mbox{ is good }  \Longleftrightarrow  \forall \, \lceil i \rceil , \lceil  j   \rceil \in C,   
\forall \,  \lceil x \rceil , \lceil y \rceil \in  L (T^{s}) -C,  \mbox{ we have that } \langle  i  ,  j |  x , y  \rangle \mbox{ holds.}$

\end{rem}







\begin{defin}    \label{fam-hier}   Let $n, k \in \N$ with $ 5 \leq k \leq n-1$. 
Let $\{D_I\}_{I \in {[n] \choose k}}$ be a  family in $\R$.
 
Let $${\cal C}^0=\left\{  Z \in { [n] \choose
           \geq 2 } \;|\;      \vspace{0.2cm} \\  D_{i,X}  - D_{j,X}  \mbox{ does not depend on } 
X \in {[n] - \{i,j\} \choose  k-1} \; \forall i, j \in Z 
\right\},$$ let $\underline{\cal C}^0$ be the set of the maximal elements of
${\cal C}^0$  
and $${\cal G}^0= \left \{ \begin{array}{ll} Z \in \underline{\cal C}^0 \; | \; & \# Z \leq n-k \mbox{ and }
 \forall \, i,j \in Z, \;\forall \,   x,y \in [n]-Z 
\mbox{ one of the following holds: } \vspace*{0.2cm}  \\ 
&
(a) \;   \{i,j\}, \{x,y\} \in \underline{\cal C}^0 \\
 & (b)  \;\exists \, R,S \in { [n]- \{i,j,x,y\}  \choose k-2} \mbox{ s.t. }  \left\{
 \begin{array}{c}D_{i,j,R}   + D_{x,y,R} \neq  D_{i,x,R}   + D_{j,y,R} \\
 D_{i,j,S}   + D_{x,y,S} \neq  D_{i,y,S}   + D_{j,x,S} \end{array} \right.
 \end{array}\right\}.$$
   Let $[n]^0= [n]$.
     We define inductively $[n]^s$, ${\cal C}^s$, $\underline{\cal C}^s$, ${\cal G}^s$ 
     as follows:
     for   $s \geq 1$, we define $[n]^s$ to be the set obtained from $[n]^{s-1}$ by eliminating for every $Z \in {\cal G}^{s-1}$ all the elements
     of $Z$  apart from the minimum 
 $${\cal C}^s= \left\{ \begin{array}{ll}
 Z  \in {[n]^s \choose \geq 2}  \; | \;&  
      \forall \, i,j \in Z, \; \forall \, x,y \in [n]^s-\{i,j\}   \mbox{  both the following do not hold: }   
 \\  & \vspace{0.2cm} \begin{array}{l} 
 (a) \;   \{i,x\}, \{j,y\} \in \underline{\cal C}^0\\ 
 (b)  \;\exists\, R,S \in { [n]- \{i,j,x,y\}  \choose k-2} \mbox{ s.t. }  
 \left\{\begin{array}{c}D_{i,x,R}   + D_{j,y,R} \neq  D_{i,j,R}   + D_{x,y,R} \\
 D_{i,x,S}   + D_{j,y,S} \neq  D_{i,y,S}   + D_{j,x,S} \end{array} \right.
 \end{array}         
\end{array}\right\},$$ let $\underline{\cal C}^s$ be the set of the maximal elements of
${\cal C}^s$ 
and  $${\cal G}^s= \left \{ \begin{array}{ll} Z \in \underline{\cal C}^s \; | \; & 
\# \partial Z \leq n-k \mbox{ and }  \forall \, i,j \in Z, \; \forall \,  x,y \in [n]-Z 
\mbox{ one of the following holds: } \vspace*{0.2cm}  \\ 
&
(a) \;   \{i,j\}, \{x,y\} \in \underline{\cal C}^0 \\
 & (b)  \;\exists R,S \in { [n]- \{i,j,x,y\}  \choose k-2} \mbox{ s.t. }  \left\{
 \begin{array}{c}D_{i,j,R}   + D_{x,y,R} \neq  D_{i,x,R}   + D_{j,y,R} \\
 D_{i,j,S}   + D_{x,y,S} \neq  D_{i,y,S}   + D_{j,x,S} \end{array} \right.
 \end{array}\right\},$$

 where: 
 we  say  $y_0 \in [n]$ descends from $y_s \in [n]^s$ if and only if 
  there exist  (not necessarily distinct)  $y_1, ...., y_{s-1} \in [n]$ and, for any   $t=0,...., s-1$,   an element of ${\cal G}^s$ containing both $y_t$ and $y_{t+1}$;
 for any $Z \in G^s$, let $\partial Z$ be  the set of the $y\in [n]$
 descending from some element of $Z$ and let 
  $\partial {\cal G}^s= \{ \partial Z\,|\, Z \in  {\cal G}^s\}$
 
Finally, we define $${\cal H} = \cup_{s \geq 0} \partial {\cal G}^s$$
and we call ${\cal H}$  {\bf the  hierarchy associated to the family}
                        $\{D_I\}_{I \in {[n] \choose k}}$. \end{defin}

Let $n, k \in \N$ with $ 3 \leq k \leq n-2$.
Let  ${\cal P}=(P,w)$ be an internal-nonzero-weighted essential  
pseudostar  of kind $(n,k)$ with  $L(P)=[n]$
and let us denote $D_I ({\cal P}) $ by $ D_I $ for any $ I \in {[n] \choose k}$. 
Observe that, by Remark \ref{buoniono} and Proposition \ref{starbell},  the hierarchy $ {\cal H}$  over $[n]$ defined  by $P$ as in Definition \ref{hier} is equal to the hierarchy associated to the family  $\{D_I\}_I$; precisely ${\cal C}^s$ is the set of the cherries of the tree $P^s$ in Definition \ref{hier}, 
 $\underline{\cal C}^s$ is the set of the complete cherries of $P^s$, and
 ${\cal G}^s$ is the set of the good cherries of $P^s$. 

\begin{rem} \label{importante}  Let $n, k \in \N$ with $ 2 \leq k \leq n-2$.
Let ${\cal P} =(P,w) $ be  a weighted pseudostar of kind $(n,k)$
with $L(P)=[n]$. Let ${\cal H}$
be a hierarchy  over $[n]$ associated to $P$as in Definition \ref{hier}. 
Observe that, for any $J \in {\cal H}$ and any $I \in { [n] \choose k}$, the subtree realizing  $D_I ({\cal P})  $ contains $e_J$ if and only if 
$ I \cap J \neq \emptyset$ and $I \not \subset J$.
\end{rem}

We are ready now to state the characterization of treelike families.
In the proof,  it will be necessary to use two technical lemmas; we postpone them to the appendix.

\begin{thm} \label{char}
Let $n, k \in \N$ with $ 5 \leq k \leq n-1$. 
Let $\{D_I\}_{I \in {[n] \choose k}}$ be a  family of  real numbers.

If $k \leq n-2$, the family $\{D_I\}_{I }$
 is l-treelike if and only if the hierarchy ${\cal H }$ over $[n]$ 
associated to the family $\{D_I\}_{I \in {[n] \choose k}}$ is such that:
   

(i)  if ${\cal H}$ covers [n], then the number of the maximal clusters
 of ${\cal H}$ is not $2$,

(ii) for any $q \in  \{1,...., n-1\}$, $s \in \{1, k-1\}$ and for any  $W,W' \in {[n]  \choose  s}$
$$ \sum_{i=1,...,q} D_{W,Z_i} - D_{W', Z_i} $$ 
does not depend on $Z_i \in {[n] -W-W' \choose  k-s}$ under the condition that, in the free $\Z$-module $\oplus_{H \in {\cal H}} \Z H$, the sum
$$ \sum_{i=1,...,q} \left[ \sum_{H \in {\cal H}, H \cap (W Z_i) \neq \emptyset, \; H \not \supset (W Z_i ) } H -
 \sum_{H \in {\cal H}, \; H \cap (W' Z_i )\neq \emptyset, \; H \not \supset (W' Z_i )\ } H \right] $$ 
does not change.

If $k = n-1$, the family $\{D_I\}_{I }$ is always  l-treelike.
\end{thm}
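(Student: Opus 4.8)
\textbf{Proof plan for Theorem \ref{char}.}

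The plan is to prove the two directions separately, treating the case $k=n-1$ first since it is the easiest. When $k=n-1$, a pseudostar of kind $(n,n-1)$ is a star, so I would directly construct a weighted star ${\cal T}=(T,w)$ with $L(T)=[n]$ realizing any given family: since every $I\in {[n]\choose n-1}$ is the complement of a single leaf $i$, writing $D_I$ as $D_{[n]-\{i\}}$, the equations $D_{[n]-\{i\}}({\cal T})=\sum_{j\neq i}w(e_j)$ form a linear system in the twig weights $w(e_1),\dots,w(e_n)$ that is always solvable (the coefficient matrix $J-\mathrm{Id}$ is invertible for $n\geq 2$, $n\neq$ the degenerate small cases which are excluded by $5\leq k\leq n-1$). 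This gives the ``always l-treelike'' claim.

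For the case $5\leq k\leq n-2$, the forward direction (l-treelike $\Rightarrow$ (i),(ii)) is where I would lean on the earlier results. If the family is l-treelike, Theorem \ref{main} gives a unique internal-nonzero-weighted essential pseudostar ${\cal P}=(P,w)$ of kind $(n,k)$ realizing it, and by the discussion following Definition \ref{fam-hier} the hierarchy ${\cal H}$ associated to the family equals the hierarchy associated to $P$ via Definition \ref{hier}. Condition (i) then reflects the structural fact (from Definition \ref{hier}, the case where $L(P^s)$ is the union of two complete cherries) that when ${\cal H}$ covers $[n]$ the tree $P$ has no central star-part, and in that situation $P$ cannot have exactly two maximal $\partial$-clusters — essentially because the penultimate pruning step would leave $P^s$ with two complete cherries, and then Definition \ref{hier} only records \emph{one} of them in ${\cal H}$, so two maximal clusters is impossible. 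For condition (ii), I would use Proposition \ref{starbell}(1.1) repeatedly: the key identity is that for a single pair $i,j$, $D_{i,X}({\cal P})-D_{j,X}({\cal P})$ counts (with weights) exactly the twigs $e_H$ that the subtree realizing $D_{i,X}$ uses but the one for $D_{j,X}$ does not, minus the reverse, and by Remark \ref{importante} membership of $e_H$ is governed precisely by the conditions $H\cap(WZ_i)\neq\emptyset$, $H\not\supset WZ_i$. Summing over $i=1,\dots,q$ and generalizing from singletons to $s$-sets $W,W'$ (with $s\in\{1,k-1\}$), the quantity $\sum_i(D_{W,Z_i}-D_{W',Z_i})$ is a $\Z$-linear function of the ``occupation vector'' $\sum_i\big[\sum_{H:\,H\cap WZ_i\neq\emptyset,\,H\not\supset WZ_i}H-\sum_{H:\,H\cap W'Z_i\neq\emptyset,\,H\not\supset W'Z_i}H\big]$ plus contributions from internal edges of $P$ not of the form $e_H$; the point is that those extra internal edges are forced to have the same ``occupation count'' whenever the recorded $\oplus_H\Z H$-vector is fixed (this is what Lemma-in-the-appendix would establish), so the difference depends only on the stated vector.

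The converse (conditions (i),(ii) $\Rightarrow$ l-treelike) is the main obstacle and the heart of the argument. Here I would first build, from ${\cal H}$, the essential pseudostar $P$ of kind $(n,k)$ as in Remark \ref{fromhiertops}. Then I must assign weights: the twig weights $w(e_H)$ for $H\in{\cal H}$ and the twig weights $w(e_i)$ for leaves $i$, plus the weights of the remaining internal edges of the central star-part and of the ``chains'' of $P$. Condition (ii) is designed exactly so that the natural candidate definitions of these weights — obtained by solving a linear system built from the $D_I$'s, taking differences $D_{i,X}-D_{j,X}$ for $i,j$ in a common cluster (well-defined by the $q=1$, $s=1$ case of (ii)) and telescoping up the hierarchy — are consistent: the content of (ii) for general $q$ and for $s=k-1$ guarantees that the various ways of computing a given partial sum of weights along a path agree. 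Condition (i) handles the one genuinely ambiguous case in the reconstruction (two maximal clusters covering $[n]$, where one cannot tell which cluster ``owns'' the center). After defining $w$, I would verify $D_I({\cal P})=D_I$ for every $I\in{[n]\choose k}$ by induction on the hierarchy, pruning good cherries and comparing with the family's own pruning process (Definition \ref{fam-hier}); the base case is a star, handled as in the $k=n-1$ argument, and each inductive step uses Remark \ref{importante} to match the edges used by the realizing subtree with the combinatorial conditions on ${\cal H}$. I expect the bookkeeping in this induction — in particular checking that the induced family on $[n]^s$ again satisfies the analogue of (i) and (ii), and that the internal-nonzero-weightedness can be arranged (or, for the p-l-treelike refinement in Remark \ref{charpos}, positivity) — to be the delicate part, and it is where the two postponed technical lemmas from the appendix will do the real work.
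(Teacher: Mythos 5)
Your overall skeleton matches the paper's: the $k=n-1$ case via a star, the forward direction via Theorem \ref{main} together with the identification of the two hierarchies and Remark \ref{importante}, and the converse via the reconstruction of Remark \ref{fromhiertops} followed by a weight assignment whose consistency is guaranteed by condition (ii). But in the converse direction -- which you yourself identify as the heart of the matter -- there is a genuine gap: you never actually produce the weights. The paper's proof hinges on two explicit formulas, namely $w(e_J):=D_{a,X}-D_{a',X}-D_{a,X'}+D_{a',X'}$ for quadruples $(a,a',X,X')$ whose associated alternating sum in $\oplus_{H\in{\cal H}}\Z H$ equals $J$ (formula (\ref{w_V})), and the formula (\ref{w_i}) for $w(e_i)$ involving a chosen $X(i,l)$ for each $l\in I$. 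The role of the two appendix lemmas is precisely to show such $(a,a',X,X')$ and $X(i,l)$ \emph{exist}, not (as you suggest) to control ``occupation counts'' of extra internal edges -- indeed the tree built in Remark \ref{fromhiertops} has no internal edges other than the $e_J$, $J\in{\cal H}$, so the ``remaining internal edges of the central star-part and of the chains'' you plan to weight do not occur. Without the four-term alternating form of the definition of $w(e_J)$, one cannot see why the bookkeeping in $\oplus_H\Z H$ in condition (ii) is the right hypothesis, nor why $q$ must be allowed to range up to $n-1$ (this is needed because checking that $w(e_i)$ is independent of the choice of $I$ produces a sum of up to $n-1$ differences $D_{W,Z_i}-D_{W',Z_i}$, bounded via a chain argument on the clusters containing $a$ but not $a'$ and vice versa).

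Your proposed verification that $D_I({\cal P})=D_I$, by induction on the pruning process, also diverges from the paper and faces a concrete obstacle: after pruning, the leaf set $[n]^s$ is a proper subset of $[n]$, and the given data consist only of $k$-weights of $k$-subsets of $[n]$, so there is no induced family on the pruned tree to which an inductive hypothesis could apply. The paper instead verifies the realization directly: it first proves the identity $w(e_i)-w(e_j)=D_{i,X(j,i)}-D_{j,X(j,i)}$ (formula (\ref{weiwej})) using condition (ii) with $s=1$, $q=1$, and then substitutes it into the tautology $D_I({\cal P})=\sum_{l\in I}w(e_l)+\sum_{J\in{\cal H},\,J\cap I\neq\emptyset,\,J\not\supset I}w(e_J)$ to match the defining formula of $w(e_i)$ term by term. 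So while your plan points at the right ingredients, the decisive constructions and the verification mechanism are missing, and the parts you flag as ``where the appendix lemmas do the real work'' assign them the wrong job.
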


\begin{proof}
If $k=n-1$, it is easy to show that there exists a weighted star realizing the family $\{D_I\}_I$.

Suppose $k \leq n-2$.
If the family $\{D_I\}_I $ is l-treelike,  then there exists a weighted pseudostar of kind $(n,k)$
realizing it by Theorem \ref{main}; it induces a hierarchy  over $[n]$  as in Definition \ref{hier}  and it is easy to see that conditions (i) and (ii) hold; by Remark \ref{importante}, we have also that condition (iii) holds. 

Suppose now that the hierarchy ${\cal H}$ over $[n]$ associated to
the family $\{D_I\}_{I \in {[n] \choose k}}$ 
satisfies (i) and (ii).  Let $P$ be the essential pseudostar of kind $(n,k)$ determined by 
${\cal H}$ (see Remark \ref{fromhiertops}); observe that it is essential by condition (i).  For any $J \in {\cal H}$, 
let $e_J$ be defined as in  Remark \ref{fromhiertops}; we define
\begin{equation} \label{w_V}
w(e_J) := D_{a,X} - D_{a' ,X}  - D_{a ,X'}   + D_{a'X'},
\end{equation}
for any $a, a' \in [n], X,X' \subset [n]$  such that  $a, a' \not \in X, X' $ 
and 
\begin{equation} \label{formal}
 \sum_{\stackrel{\mbox{\scriptsize $H \in {\cal H}$,}}{ \;H \cap (aX) \neq \emptyset, \; H \not \supset (aX)}}
H \;- 
\sum_{\stackrel{\mbox{\scriptsize $H \in {\cal H}$,}}{ \;H \cap (a'X) \neq \emptyset, \; H \not \supset (a'X)}}
H  \;
- 
\sum_{\stackrel{\mbox{\scriptsize $H \in {\cal H}$,}}{ \;H \cap (aX') \neq \emptyset, \; H \not \supset (aX')}}
H  \;
+
\sum_{\stackrel{\mbox{\scriptsize $H \in {\cal H}$,}}{ \;H \cap (a'X') \neq \emptyset, \; H \not \supset (a'X')}}
H  \end{equation}
is equal to $J$. 
Let us check that the definition of $w(e_J)$  is a good definition: 
\begin{itemize}\itemsep0.5pt
\item
to see that it does not depend on $X$, it is sufficient 
to see that $D_{a,X} - D_{a' ,X}$ does not depend on $X$ under the condition that the  sum in (\ref{formal}) does not depend on $X$; obviously this is equivalent to the fact that 
$ \sum_{ H \in {\cal H},  H \cap (aX) \neq \emptyset, H \not\supset (aX) }   H 
-  \sum_{ H \in {\cal H},  H \cap (a'X) \neq \emptyset,  H \not\supset (a'X)}   H $ does not depend on $X$; so our assertion
follows from condition (ii) by taking $q=1$, $s=1$, $W=\{a\}$, $W'=\{a'\}$ and $Z_1=X$;
in an analogous way we can see that it does not depend on $X'$;
\item to see that it does not depend on $a$, it is sufficient 
to see that $D_{a,X} - D_{a,X'}$ does not depend on $a$ under the condition that the sum in (\ref{formal}) does not depend on $a$; obviously this is equivalent to the fact that 
$ \sum_{ H \in {\cal H},  H \cap (aX) \neq \emptyset,  H \not\supset (aX)}   H 
-  \sum_{ H \in {\cal H},  H \cap (aX') \neq \emptyset, H \not\supset (aX')}   H $ does not depend on $a$; so our assertion
follows from condition (ii) by taking $q=1$, $s=k-1$, $W=X$, $W'=X'$,  and $Z_1=\{a\}$;
in an analogous way we can see that it does not depend on $a'$.
\end{itemize}
Moreover, observe that,  by Lemma \ref{aa'XX'},  it is possible to find $a,a',X,X'$ as required.

For any $i \in [n] $, we define the weight of the twig $e_i$ as follows:  
\begin{equation} \label{w_i}
w(e_i)   :=\frac{1}{k}
\left[ D_I  + \sum_{l \in I } \left(D_{i,X(i,l)}  - D_{l,X(i,l)}  \right) - \sum_{J \in  {\cal H}, \; J   \cap I \neq \emptyset, \; J \not \supset I } 
w(e_J) \right]
\end{equation}
for any $ I  \in {[n] \choose k}$,  $X(i,l)  \in {[n]-\{i,l\} \choose k-1}$  such that
$$ \sum_{H \in {\cal H}, \;H \cap (i,X(i,l)) \neq \emptyset, \; H \not \supset (i,X(i,l))}
H\;\; \;- \sum_{H \in {\cal H}, \;H \cap (l,X(i,l)) \neq \emptyset, \; H \not \supset (l ,X(i,l))}
H  \,\;= \;\;0.$$ 
Observe that, by Lemma \ref{X(i,l)}, it is possible to find  $X(i,l)$ as required.
The definition of $w(e_i)$ does not depend on the choice of $X(i,l)
$ by condition (ii); we have to show that it does not depend
 on $I$. Let $I=(a,Y) $ and $I' =(a',Y)$  for some distinct $a,a' \in [n]$,  $ Y \in { [n]-\{a,a'\} \choose k-1}$. We have to show that 
$$D_{a, Y} + \sum_{l \in (aY) } \left(D_{i,X(i,l)}  - D_{l,X(i,l)}  \right) - \sum_{J \in  {\cal H},  J   \cap (aY) \neq \emptyset,  J \not\supset (aY)  }  w(e_J) =$$ 
$$=D_{a' ,Y} + \sum_{l \in (a'Y)} \left(D_{i,X(i,l)}  - D_{l,X(i,l)}  \right) - \sum_{J \in  {\cal H},  J   \cap (a'Y) \neq \emptyset,  J \not\supset (a'Y)  }  w(e_J) ,$$ 
that is 
$$D_{a, Y} + D_{i,X(i,a)}  - D_{a,X(i,a)}   - \!\!\!\! \sum_{\stackrel{\scriptsize \mbox{$J \in  {\cal H}$,}}{  J   \cap (aY) \neq \emptyset, J \not\supset (aY) } }  \!\!\!\!  w(e_J) =
D_{a', Y} +  D_{i,X(i,a')}  - D_{a',X(i,a')}   -   \!\!\!\! \sum_{\stackrel{\scriptsize \mbox{$J \in  {\cal H}$,}}{   J   \cap (a'Y) \neq \emptyset, J \not\supset (a'Y)}  }   \!\!\!\! w(e_J) .$$

Observe that  $\{J \in  {\cal H}| \,  J  \cap (aY) \neq \emptyset,\, J \not\supset (aY) \} $ can be written as disjoint union of the following sets:
$$\begin{array}{c}
\{J \in  {\cal H}| \, J \ni a, \, J \not \ni a', \,  J   \cap (aY) \neq \emptyset,\, J \not\supset (aY) \} ,
\{J \in  {\cal H}| \, J  \not\ni a, \, J \ni a', \,  J   \cap (aY) \neq \emptyset,\, J \not\supset (aY) \}  ,
\\
\{J \in  {\cal H}| \, J  \ni a, \, J \ni a', \,  J   \cap (aY) \neq \emptyset,\, J \not\supset (aY) \} ,
\{J \in  {\cal H}| \, J \not \ni a, \, J  \not \ni a', \,  J   \cap (aY) \neq \emptyset,\, J \not\supset (aY) \}, 
\end{array}$$ that is, as disjoint union of 
$$\begin{array}{c}
\{J \in  {\cal H}| \, J \ni a, \, J \not \ni a', \, J \not\supset Y \} ,\; \;
\{J \in  {\cal H}| \, J  \not\ni a, \, J \ni a', \,  J   \cap Y \neq \emptyset\}  ,
\\
\{J \in  {\cal H}| \, J  \ni a, \, J \ni a', \,   J \not\supset Y \} ,\;\;
\{J \in  {\cal H}| \, J \not \ni a, \, J  \not \ni a', \,  J   \cap Y \neq \emptyset\} ,
\end{array}$$ and then as disjoint union of 
$$\begin{array}{ll}
\{J \in  {\cal H}| \, J \ni a, \, J \not \ni a', \, J   \cap Y \neq \emptyset, \, J \not\supset Y \} ,&
\{J \in  {\cal H}| \, J  \not\ni a, \, J \ni a', \,  J   \cap Y \neq \emptyset, \, J \not\supset Y\}  ,
\\\{J \in  {\cal H}| \, J \ni a, \, J \not \ni a', \, J   \cap Y = \emptyset\} ,&
\{J \in  {\cal H}| \, J  \not\ni a, \, J \ni a', \,  J   \supset Y\}  ,
\\
\{J \in  {\cal H}| \, J  \ni a, \, J \ni a', \,   J \not\supset Y \} ,&
\{J \in  {\cal H}| \, J \not \ni a, \, J  \not \ni a', \,  J   \cap Y \neq \emptyset\} ,
\end{array}$$ 
Analogously we can write  $\{J \in  {\cal H}| \,  J  \cap (a'Y) \neq \emptyset,\, J \not\supset (a'Y) \} $.


Let us  take both $ X(i,a)$  and $ X (i,a')$  equal to a set $X$   satisfying the conditions of Lemma \ref{X(i,l)}  for $i,a$, for  $i,a'$ and for 
$a,a'$  (there exists since $k\geq 5$). By simplifying,
 the assertion becomes 
$$D_{a ,Y}   - D_{a,X}   - \!\! \! \sum_{ 
\stackrel{ \mbox{\scriptsize $J \in  {\cal H}$  and } }{
\stackrel{ \mbox{\scriptsize either   $J \ni a, J \not \ni a', J \cap Y = \emptyset$} }{ 
\mbox{\scriptsize or $\, J \ni a', J \not \ni a, \,Y \subset J$ }  }}
} 
 w(e_J) \;\;\;= \;\;\;
D_{a' ,Y}  - D_{a',X}   - \!\!\! \sum_{ 
\stackrel{ \mbox{\scriptsize $J \in  {\cal H}$  and } }{
\stackrel{ \mbox{\scriptsize either   $J \ni a', J \not \ni a, J \cap Y = \emptyset$} }{ 
\mbox{\scriptsize or $\, J \ni a, J \not \ni a',\, Y \subset J$ }  }}
} 
 w(e_J) . $$ 
For any $J \in  {\cal H}$ such that $  J \ni a', J \not \ni a $, and   $J \cap Y = \emptyset $ or 
$Y \subset J$,
 let $Z_J, Z'_J$ be such that  the sum 
$$ \sum_{\stackrel{\mbox{\scriptsize $H \in {\cal H}, H \cap (a' Z_J) \neq \emptyset$}}{\mbox{\scriptsize $H \not \supset (a' Z_J)$} }}
 H  -
\sum_{\stackrel{\mbox{\scriptsize $H \in {\cal H}, H \cap (a Z_J) \neq \emptyset$}}{\mbox{\scriptsize $H \not \supset (a Z_J)$} }}
 H  -
\sum_{\stackrel{\mbox{\scriptsize $H \in {\cal H}, H \cap (a' Z'_J) \neq \emptyset$}}{\mbox{\scriptsize $H \not \supset (a' Z'_J)$} }} H  +
\sum_{\stackrel{\mbox{\scriptsize $H \in {\cal H}, H \cap (a Z'_J) \neq \emptyset$}}{\mbox{\scriptsize $H \not \supset (a Z'_J)$} }} H  $$ is equal to $J$.
By  the definition in (\ref{w_V}),
we have that $$w(e_J)= D_{a', Z_J} - D_{a ,Z_J} - D_{a' ,Z'_J} + D_{a, Z'_J}.$$ 
For any $J \in  {\cal H}$ such that $  J \ni a, J \not \ni a'$, and $ J \cap Y = \emptyset  $
or $ Y \subset J$,   let $R_J, R'_J$ 
be such that  the sum 
$$ \sum_{\stackrel{\mbox{\scriptsize $H \in {\cal H}, H \cap (a R_J) \neq \emptyset$}}{\mbox{\scriptsize $H \not \supset (a R_J)$} }} H  -
\sum_{\stackrel{\mbox{\scriptsize $H \in {\cal H}, H \cap (a' R_J) \neq \emptyset$}}{\mbox{\scriptsize $H \not \supset (a' R_J)$} }}
H  -
\sum_{\stackrel{\mbox{\scriptsize $H \in {\cal H}, H \cap (a R'_J) \neq \emptyset$}}{\mbox{\scriptsize $H \not \supset (a R'_J)$} }} H  +
\sum_{\stackrel{\mbox{\scriptsize $H \in {\cal H}, H \cap (a' R'_J) \neq \emptyset$}}{\mbox{\scriptsize $H \not \supset (a' R'_J)$} }} H  $$ is equal to $J$; 
by  the definition in (\ref{w_V}),
we have that  $$w(e_J)= D_{a, R_J} - D_{a', R_J} - D_{a, R'_J} + D_{a' ,R'_J}.$$ 
So our assertion becomes
\begin{equation} \label{ass}
\begin{array}{ll}
D_{a ,Y}   - D_{a', Y}  \;\; =  & \;\;\; D_{a,X} - D_{a',X}   
\vspace*{0.05cm}\\  
& - \sum_{J \in  {\cal H},  J \cap Y = \emptyset ,
 J \ni a', J \not \ni a  }  (D_{a', Z_J} - D_{a ,Z_J} - D_{a', Z'_J} + D_{a ,Z'_J}) \\
\vspace*{0.2cm} &
- \sum_{J \in  {\cal H},  Y \subset J , J \ni a, J \not \ni a'  }  
(D_{a, R_J} - D_{a', R_J} - D_{a, R'_J} + D_{a' ,R'_J} )
 \\
& + \sum_{J \in  {\cal H},   Y \subset J,
 J \ni a', J \not \ni a  }  (D_{a', Z_J} - D_{a ,Z_J} - D_{a', Z'_J} + D_{a ,Z'_J}) \\
\vspace*{0.2cm}
& + \sum_{J \in  {\cal H},  J \cap Y = \emptyset , J \ni a, J \not \ni a'  }  
(D_{a, R_J} - D_{a', R_J} - D_{a, R'_J} + D_{a' ,R'_J} ),  
\end{array}
\end{equation}
that is 
\begin{equation} \label{ass2}
\left(\begin{array}{r}
D_{a ,Y}   - D_{a', Y}     
\\  + \sum_{J \in  {\cal H},  J \cap Y = \emptyset , J \ni a', J \not \ni a  }  (  D_{a, Z'_J} - D_{a' ,Z'_J})   
\\ +\sum_{J \in  {\cal H},  Y \subset J , J \ni a, J \not \ni a'  }  (  D_{a, R_J} - D_{a' ,R_J} )
\\ + \sum_{J \in  {\cal H},   Y \subset J, J \ni a', J \not \ni a  }  (D_{a, Z_J} - D_{a' ,Z_J} ) 
\\ + \sum_{J \in  {\cal H},  J \cap Y = \emptyset , J \ni a, J \not \ni a'  }  ( D_{a, R'_J} - D_{a' ,R'_J} )  
\end{array}\right) =
\left(\begin{array}{l}
  D_{a,X} - D_{a',X}   
\\ +  \sum_{J \in  {\cal H},  J \cap Y = \emptyset , J \ni a', J \not \ni a  }  (D_{a, Z_J} - D_{a' ,Z_J} ) 
\\ + \sum_{J \in  {\cal H},  Y \subset J , J \ni a, J \not \ni a'  }  (D_{a, R'_J} - D_{a', R'_J}  )
\\ + \sum_{J \in  {\cal H},   Y \subset J, J \ni a', J \not \ni a  }  ( D_{a, Z'_J} - D_{a' ,Z'_J}) 
\\ + \sum_{J \in  {\cal H},  J \cap Y = \emptyset , J \ni a, J \not \ni a'  }  (D_{a, R_J} - D_{a', R_J}  ) . 
\end{array}\right)
\end{equation}

 Observe that 
$$\# ( \{J \in  {\cal H}| \;
 J \ni a', J \not \ni a \} \cup \{J \in  {\cal H}|\;  
 J \ni a, J \not \ni a' \}) \leq n-2, $$
in fact: let $$x := \#  \{J \in  {\cal H}|\;  J \ni a', J \not \ni a \},\;\;\;\;\;\;\;y := \#  \{J \in  {\cal H} |\;J \ni a, J \not \ni a' \};$$ 
the set $\{J \in  {\cal H}| \; J \ni a', J \not \ni a \}$ is  a chain, so in its  largest ${\cal H}$-cluster,  call it $A$, there are at least $x+1$ elements; analogously in the largest ${\cal H}$-cluster  contained in  $\{J \in  {\cal H}| \; J \ni a', J \not \ni a \}$, call it $B$, there are at least $y+1$ elements; since $A$ and $B$ are  disjoint, we have that $$ (x+1) + (y +1)  \leq n,$$  thus
 $x+y \leq n-2$, as we wanted to prove.  Hence the number of the terms
at each member of (\ref{ass2}) is at most $n-1$.
Therefore it is easy to see that our assertion (\ref{ass2}) follows from 
 condition~(ii): write it as (\ref{ass}) and observe that
the  sum $$ \sum_{H \in {\cal H}, \;  H \cap (aX)  \neq \emptyset, \;  H \not \supset (aX) } H
- \sum_{H \in {\cal H} , \; H \cap (a'X)  \neq \emptyset, \;  H \not \supset (a'X) } H $$ is $0$ for the definition of $X$.

So we have defined the weight of $e_i$ for every $i \in [n]$ and the weight of $e_J$ for every $J \in {\cal H}$.

Let ${\cal P}=(P,w)$, where $w$ is the weight we have just defined.
We have to show that $D_I ({\cal P}) =D_I $ for any $I \in {[n] \choose k}$.
First we show that, for any $i,j \in [n]$,  
\begin{equation} \label{weiwej} w(e_i) -w (e_j) =D_{i, X(j,i)} - D_{j, X(j,i)},
\end{equation}
for any $X(i,j) $ such that
$$\sum_{\stackrel{ \mbox{\scriptsize $H \in {\cal H}, H \cap (j,X(j,i))  \neq \emptyset$}}{ 
\mbox{\scriptsize $H \not\supset (j,X(j,i))$} } } 
 H
-\sum_{\stackrel{ \mbox{\scriptsize $H \in {\cal H}, H \cap (i,X(j,i))  \neq \emptyset$}}{ 
\mbox{\scriptsize $H \not\supset (i,X(j,i))$} } } 
 H=0. $$
Let us choose the same $I$ in the definition of $ w(e_i)$ and $  w (e_j) $ (see  (\ref{w_i})) and let us choose it containing neither $i$ nor $j$; so  we get 
$$ w(e_i) -w (e_j) = \frac{1}{k} \left[ 
\sum_{t \in I} \left( D_{i ,X(t,i)} - D_{t, X(t,i)} \right)-  
\sum_{t \in I } \left( D_{j, X(t,j)} - D_{t, X(t,j)} \right) 
\right] = $$ 
$$ =\frac{1}{k} \left[ 
\sum_{t \in I } 
\left( D_{i, X(t,i)} - D_{t ,X(t,i)} - D_{j, X(t,j)} + D_{t, X(t,j)} \right)  
\right].$$
For any $t \in I$,  take $X(t,i)$ and $ X(t,j)$ equal to a set $X_t$  satisfying the conditions of Lemma \ref{X(i,l)}  for the couple  $t,i$, for the couple $t,j$ and for the couple $i,j$ (there exists since $k \geq 5$). So we get 
$$ w(e_i) -w (e_j) =
\frac{1}{k} \left[ 
\sum_{t \in I } \left( D_{i, X_t} - D_{j ,X_t} \right) 
\right] .$$ 
Moreover, by condition (ii),   we have that 
$ D_{j ,X_t}  - D_{i, X_t} = D_{j, X(j,i)} - D_{i, X(j,i)}$ for any $t \in I$, since 
$$ \sum_{\stackrel{ \mbox{\scriptsize $H \in {\cal H}, H \cap (j,X_t))  \neq \emptyset$}}{ 
\mbox{\scriptsize $H \not\supset (j,X_t)$} } }  H
-\sum_{\stackrel{ \mbox{\scriptsize $H \in {\cal H}, H \cap (i,X_t)  \neq \emptyset$}}{ 
\mbox{\scriptsize $H \not\supset (i,X_t)$} } } 
 H \;\;= \,\; 0.$$
 Hence we get (\ref{weiwej}).
 
Obviously, for any $ I \in  {[n] \choose k}$, we have that
$$ D_I ({\cal P}) = \sum_{l \in I} w(e_l) + \sum_{J \in  {\cal H}, \, J   \cap I \neq \emptyset, \, 
J \not \supset I} 
w(e_J). $$ 
So, for any $i \in I$,
$$ w(e_i) = \frac{1}{k} \left[  D_I ({\cal P}) + \sum_{l \in I } \left( w(e_i) 
 - w(e_l) \right) - \sum_{J \in  {\cal H},\,  J   \cap I \neq \emptyset,\, 
J \not \supset I } 
w(e_J) 
\right] ,$$ 
which, by (\ref{weiwej}), is equal to 
$$ w(e_i) = \frac{1}{k} \left[  D_I ({\cal P}) + \sum_{l \in I } \left(
D_{i ,X(l,i)} - D_{l, X(l,i)}   \right) - \sum_{J \in  {\cal H}, \, J   \cap I \neq \emptyset,\, 
J \not \supset I } w(e_J) \right] .$$ 
On the other side we have defined $w(e_i) $ to be 
$$ \frac{1}{k} \left[  D_I  + \sum_{l \in I } \left(
D_{i, X(l,i)} - D_{l, X(l,i)}   \right) - \sum_{J \in  {\cal H}, \, J   \cap I \neq \emptyset, \, 
J \not \supset I} w(e_J) \right] ,$$ 
so we get $D_I ({\cal P}) =D_I $ for any $I$.
\end{proof}

\begin{rem} \label{charpos} Let $n, k \in \N$ with $ 2 \leq k \leq n-2$.
Let $\{D_I\}_{I \in {[n] \choose k}}$ be a  family of positive  real numbers.
Obviously the family  $\{D_I\}_I$ is p-l-treelike if and only if 
there  exists a hierarchy ${\cal H}$ over $[n]$ such that 
the conditions (i) and (ii) of Theorem \ref{char} hold, and, in addition,
the numbers in (\ref{w_V}) and (\ref{w_i}) are positive for any $i \in [n], J \in {\cal H}$.
\end{rem}

\section{Appendix}

\begin{lem} \label{X(i,l)}
Let $k, n \in \N$ with $ 4 \leq k \leq n-2$. Let ${\cal H}$ be a hierarchy on $[n]$ such that  its clusters have cardinality less than or equal to $n-k$ and greater than or equal to $2$. For any $ t \in \cup_{H \in {\cal H}} H$, 
denote by $m_t$  the minimal $\cal{H}$-cluster containing $t$
and by $M_t$  the maximal $\cal{H}$-cluster containing $t$. 
Let $i,l \in [n]$ and
 $ X \in {[n]-\{i,l\} \choose k-1}$ satisfy the following conditions:

\begin{itemize}
\item[$\bullet$] if $i,l \in \cup_{H \in {\cal H}} H$ and $M_i \cap M_l= \emptyset $, then 
	\vspace*{-0.1cm}
	\begin{itemize}\itemsep0.1pt
	\item[] $X$ contains an element $\overline{i} \in m_i$ different from $i$,
 	\item[] $X$ contains an element $\overline{l} \in m_l$ different from $l$;
	\end{itemize}
	\vspace*{-0.2cm}

\item[$\bullet$] if $i,l \in \cup_{H \in {\cal H}} H$ and $M_i \cap M_l\neq \emptyset $ then
	\vspace*{-0.1cm}
	\begin{itemize}
	\item[$\centerdot$] if $m_i \subset m_l$: 
		\vspace*{-0.1cm}
		\begin{itemize}\itemsep0.1pt
		\item[] $X$ contains an element $\overline{i} \in m_i$ different from $i$,
		\item[] $X$ contains an element $\hat{i} \in [n]-M_i;$
		\end{itemize}
	\item[$\centerdot$] if $m_l \subset m_i$:
		\vspace*{-0.1cm}
		\begin{itemize}\itemsep0.1pt
		\item[] $X$ contains an element $\overline{l} \in m_l$ different from $l$,
		\item[] $X$ contains an element $\hat{l} \in [n]-M_l;$
		\end{itemize}
	\item[$\centerdot$] if $m_i \cap m_l=\emptyset$:
		\vspace*{-0.1cm}
		\begin{itemize}\itemsep0.1pt
		\item[] $X$ contains an element $\overline{i} \in m_i$ different from $i$,
		\item[] $X$ contains an element $\overline{l} \in m_l$ different from $l$,
		\item[] $X$ contains an element $\hat{i} \in [n]-M_i;$
		\end{itemize}
	\end{itemize}
	\vspace*{-0.2cm}

\item[$\bullet$] if $i \in \cup_{H \in {\cal H}} H$ and $l \not\in \cup_{H \in {\cal H}} H$, then 
	\vspace*{-0.1cm}
	\begin{itemize}\itemsep0.1pt
	\item[] $X$ contains an element $\overline{i} \in m_i$ different from $i$,
	\item[] $X$ contains an element $\hat{i} \in [n]-M_i$;
	\end{itemize}
	\vspace*{-0.2cm}

\item[$\bullet$] if $l \in \cup_{H \in {\cal H}} H$ and $i \not\in \cup_{H \in {\cal H}} H$, then
	\vspace*{-0.1cm}
	\begin{itemize}\itemsep0.1pt
	\item[] $X$ contains an element $\overline{l} \in m_l$ different from $l$,
	\item[] $X$ contains an element $\hat{l} \in [n] - M_l.$
	\end{itemize}
\end{itemize}
\vspace*{-0.2cm}

Then, in the free $\Z$-module $\oplus_{H \in {\cal H}} \Z H$,  $$ \sum_{H \in {\cal H}, \;H \cap (iX) \neq \emptyset, \; H \not \supset (iX)}
H\;\; \;- \sum_{H \in {\cal H}, \;H \cap (lX) \neq \emptyset, \; H \not \supset (lX)}
H  \,\;= \;\;0$$ 
\end{lem}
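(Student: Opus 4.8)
The plan is to reduce the asserted identity in $\oplus_{H\in\mathcal H}\Z H$ to a purely set-theoretic statement about how the clusters of $\mathcal H$ meet $X$, and then to establish that statement by running through the same case distinction that appears in the hypotheses.

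\emph{Step 1: reformulation.} Since the clusters of $\mathcal H$ form a basis of $\oplus_{H\in\mathcal H}\Z H$, the displayed element vanishes if and only if, for each $H\in\mathcal H$, the condition ``$H\cap(iX)\neq\emptyset$ and $H\not\supset(iX)$'' holds exactly when ``$H\cap(lX)\neq\emptyset$ and $H\not\supset(lX)$'' holds. Fix $H$ and recall $X\neq\emptyset$ (since $|X|=k-1\geq 3$). If $\emptyset\neq H\cap X\subsetneq X$, both conditions hold, so $H$ contributes $1-1=0$ to the difference. If $H\cap X=\emptyset$, then $H$ contributes to the first sum iff $i\in H$ and to the second iff $l\in H$. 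If $X\subset H$, then $H$ contributes to the first sum iff $i\notin H$ and to the second iff $l\notin H$. Hence the conclusion is equivalent to the claim $(\star)$: \emph{every $H\in\mathcal H$ that contains exactly one of $i$ and $l$ satisfies $\emptyset\neq H\cap X$ and $X\not\subset H$.}

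\emph{Step 2: cluster bookkeeping.} I would use the standard facts that in a hierarchy two clusters are nested or disjoint, that the clusters containing a fixed $t\in\cup_{H\in\mathcal H}H$ form a chain (so every such cluster lies between $m_t$ and $M_t$), and that $M_i\cap M_l\neq\emptyset$ forces $M_i=M_l$ (if $M_i\subset M_l$ then $M_l$ contains $i$, whence $M_l\subset M_i$ by maximality of $M_i$). In particular, if $m_i\subset m_l$ then every cluster containing $l$ contains $m_l\supset m_i\ni i$, hence contains $i$, so no cluster separating $i$ and $l$ has $l$ in it but not $i$; symmetrically if $m_l\subset m_i$. Thus the bullets of the hypotheses really exhaust all the ways a cluster can contain exactly one of $i,l$: if neither $i$ nor $l$ lies in $\cup_{H\in\mathcal H}H$ there is no such cluster; if exactly one does, the third or fourth bullet applies; if both do, one splits according to $M_i\cap M_l=\emptyset$ versus $M_i=M_l$, and in the latter case further according to the three possibilities for $m_i$ and $m_l$.

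\emph{Step 3: proof of $(\star)$.} Let $H$ separate $i$ and $l$; by symmetry (using $M_i=M_l$ in the second bullet for the other case) assume $i\in H$ and $l\notin H$, so $i\in\cup_{H\in\mathcal H}H$ and $m_i\subset H\subset M_i$, and by Step 2 we are not in the bullet $m_l\subset m_i$. In each relevant bullet $X$ contains an element $\overline i\in m_i\setminus\{i\}\subset H$, so $H\cap X\neq\emptyset$. To produce an element of $X$ outside $H$: if the bullet supplies $\hat i\in[n]-M_i$, then $\hat i\notin H$ since $H\subset M_i$; otherwise the bullet is the first one, $M_i\cap M_l=\emptyset$, and $X$ contains $\overline l\in m_l\subset M_l$, with $\overline l\notin H$ because $H\subset M_i$ and $M_i\cap M_l=\emptyset$. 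In all cases $X\not\subset H$, proving $(\star)$ and hence the lemma.

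\emph{Main obstacle.} The argument uses no hard computation; the delicate part is the bookkeeping in Steps 2--3 — checking that the (sometimes redundant) elements prescribed for $X$ suffice, in every configuration of $m_i,M_i,m_l,M_l$, to make each separating cluster meet $X$ without being contained in it. The one structural point that does real work is that a cluster $H$ with $i\in H$, $l\notin H$ cannot contain $m_l$, and, when $M_i\cap M_l=\emptyset$, is even disjoint from $M_l$, which is what makes an ``outside'' element of $X$ available in that case.
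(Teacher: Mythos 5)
Your proof is correct and follows essentially the same route as the paper's: both verify, cluster by cluster, that the two membership/containment conditions agree, using exactly the prescribed witnesses $\overline{i},\overline{l},\hat{i},\hat{l}$ (and disjointness of $M_i$ and $M_l$, or $M_i=M_l$) in each sub-case. Your reduction to the claim $(\star)$ about clusters separating $i$ and $l$ is just a cleaner packaging of the paper's five-case analysis.
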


\begin{proof} 
We have to show that, for every $V \in {\cal H}$, we have that 
$V \cap (iX) \neq \emptyset$ and $ V \not \supset (iX)$ if and only if 
$V \cap (lX) \neq \emptyset$ and $ V \not \supset (lX)$.
We have five possible cases.

$\bullet$ $V \cap X= \emptyset $.

We want to prove that, 
in this case, we have that $V \cap (iX) = \emptyset $. Suppose on the contrary  that  $V \cap (iX) \neq \emptyset $; hence $i \in  V $ and then, obviously, $i \in \cup_{H \in {\cal H}} H$.
If $l \in \cup_{H \in {\cal H}} H$, $M_i \cap M_l \neq \emptyset$ 
and $m_l\subset m_i,$ then $\overline{l} \in X$ by assumption; 
by definition, we have that $\overline{l} \in m_l $ and, 
since   $m_l \subset m_i \subset V$, we have $\overline{l} \in V$ and thus $X \cap V \neq \emptyset$, which is absurd. 
In the other cases, by assumption we have that
$X \ni \overline{i}$; moreover  $\overline{i} \in V$, since   $m_i$ contains $\overline{i}$ and is contained in $V$; so we get that $V \cap X \neq \emptyset$, which is absurd.
Analogously, we can show that    $V \cap (lX) = \emptyset $ and then we can conclude.

$\bullet$ $V \cap X \neq  \emptyset $, $V \not\ni i,l$. 

In this case, we have obviously that  $V \cap (iX) \neq \emptyset $, $V \cap (lX) \neq \emptyset $, $V \not \supset (iX)  $, $V \not \supset (lX)  $ and we can conclude.

$\bullet$ $V \cap X \neq  \emptyset $,  $V \ni i,l$.
 
In this case,  we have obviously that  $V \cap (iX) \neq \emptyset $ and
 $V \cap (lX) \neq \emptyset $. Furthermore,  $V  \supset (iX) $ if and only if $ V  \supset X$ and this holds if and only if   $V \supset (lX)  $, so  we can conclude.

$\bullet$ $V \cap X \neq  \emptyset $,  $V \ni i$,   $V \not \ni l$. 

In this case,  we have obviously that $i \in \cup_{H \in {\cal H}} H$; moreover
  $V \cap (iX) \neq \emptyset $ and
 $V \cap (lX) \neq \emptyset $. Furthermore,  $V  \not \supset (lX) $ since $V \not \ni l$.
So we have to prove that  $V  \not \supset (iX) $. Suppose on the contrary that  
$V  \supset (iX) $; thus $V  \supset X $. 

If $l \not \in \cup_{H \in {\cal H}} H$, then,
by assumption, $X \ni  \hat{i}$; since $V \supset X$, we have that $V \ni \hat{i}$,
and thus $\hat{i} \in M_i$,
which is absurd. We can argue analogously 
in case $l  \in \cup_{H \in {\cal H}} H$, $M_i \cap M_l \neq \emptyset$, 
and $m_i \subset m_l$  or  $m_i \cap m_l=\emptyset$.\\
If $l  \in \cup_{H \in {\cal H}} H$, $M_i \cap M_l \neq \emptyset$, 
and $m_l \subset m_i$, then,
by assumption, $X \ni \hat{l}$; since $V \supset X$ and $M_i \supset V$ (because $V$ contains $i$), we have that   $M_i \ni   \hat{l}$; furthermore
observe that $M_i=M_l$, because if two $\cal{H}$-clusters have a nonempty intersection and are maximal, then they are equal; so $M_l \ni   \hat{l}$, which is absurd.\\
If $l  \in \cup_{H \in {\cal H}} H$ and $M_i \cap M_l = \emptyset$, then by assumption $X \ni \overline{l}$; since $X \subset V$, we have that $\overline{l}\in V$; since $V \subset M_i \ $ (because $V$ contains $i$), we get that $ \overline{l} \in M_i$ and thus  $
\overline{l} \in M_i \cap M_l $, which is absurd. 

$\bullet$ $V \cap X \neq  \emptyset $,  $V \ni l$,   $V \not \ni i$. 

Analogous to the previous case.
\end{proof}

\begin{lem} \label{aa'XX'}
Let $k, n \in \N$ with $ 4 \leq k \leq n-2$. Let ${\cal H}$ be a hierarchy on $[n]$ such that its clusters have cardinality less than or equal to $n-k$ and greater than or equal to $2$.
Let $a,a' \in [n], $ $ J \in {\cal H}$ with $a \in J $, $a' \not \in J$.
Let denote the maximal cluster containing $a'$ and the minimal cluster containing
$a'$ respectively by $M_{a'}$ and $m_{a'}$.
Let $X,X'\in {[n]-\{a,a'\} \choose k-1}$ satisfy the following conditions:

\begin{enumerate}

\item if $a' \in \cup_{H \in {\cal H}}H$, then 
\vspace*{-0.2cm}
\begin{enumerate}\itemsep0.1cm  
\item[1.1] $X$ and $X'$ 
contain an element $b$ of  $m_{a'}$ 
with  $ b \neq a'$;

\item[1.2] $X$ 
contains an element $c$ which is not in $M_{a'}$ 
and $X'$ contains an element $c'$ which is not in  $M_{a'}$; 
\end{enumerate}

\item if $a' \not\in \cup_{H \in {\cal H}}H$, then $X$ and $X'$ 
contain an element $d$ which is not in the maximal cluster containing $J$;

\item if there exists $\overline{J} $ in ${\cal H}$ with 
$a \in \overline{J} \subsetneq J $, suppose that $\overline{J}$ is maximal  among the ${\cal H}$-clusters with these
characteristics; then 
$X'$ contains an element of $J -\overline{J}$ and  $X' \cap \overline{J}= \emptyset $; 
 if there does not  exist $\overline{J} $ in ${\cal H}$ with 
$a \in \overline{J} \subsetneq J $, then  $X' \cap J \neq  \emptyset $; 

\item $X \cap J= \emptyset $; moreover, if there exists $\tilde{J}$ in ${\cal H}$ with 
$ J \subsetneq  \tilde{J}$, suppose that $\tilde{J}$ is minimal among the ${\cal H}$-clusters with these characteristics; then 
$X$ contains an element of $\tilde{J} -J$;  
\end{enumerate}

Then, in the free $\Z$-module $\oplus_{H \in {\cal H}} \Z H$, 
\begin{equation} \label{tesi} J= 
 \sum_{\stackrel{\mbox{\scriptsize $H \in {\cal H}$,}}{ \;H \cap (aX) \neq \emptyset, \; H \not \supset (aX)}}
H \;- 
\sum_{\stackrel{\mbox{\scriptsize $H \in {\cal H}$,}}{ \;H \cap (a'X) \neq \emptyset, \; H \not \supset (a'X)}}
H  \;
- 
\sum_{\stackrel{\mbox{\scriptsize $H \in {\cal H}$,}}{ \;H \cap (aX') \neq \emptyset, \; H \not \supset (aX')}} H  \; +
\sum_{\stackrel{\mbox{\scriptsize $H \in {\cal H}$,}}{ \;H \cap (a'X') \neq \emptyset, \; H \not \supset (a'X')}}
H  \end{equation}


\end{lem}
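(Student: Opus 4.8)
The plan is to expand the right-hand side of \eqref{tesi} in the free basis $\{H\}_{H\in{\cal H}}$ of $\oplus_{H\in{\cal H}}\Z H$ and to check that the coefficient of each cluster $H\in{\cal H}$ equals $1$ when $H=J$ and $0$ otherwise. For a subset $A$ of $[n]$ say that $H$ \emph{splits} $A$ if $H\cap A\neq\emptyset$ and $A\not\subset H$, and put $s_A(H)=1$ in that case and $s_A(H)=0$ otherwise; the coefficient of $H$ on the right-hand side of \eqref{tesi} is then $c(H):=s_{aX}(H)-s_{a'X}(H)-s_{aX'}(H)+s_{a'X'}(H)$. First I would dispose of the clusters $H$ containing both or neither of $a,a'$. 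If $a,a'\notin H$, then $s_{aX}(H)=s_{a'X}(H)$, both being $1$ exactly when $H\cap X\neq\emptyset$, and likewise $s_{aX'}(H)=s_{a'X'}(H)$, so $c(H)=0$; if $a,a'\in H$, then $s_{aX}(H)=s_{a'X}(H)$, both being $1$ exactly when $X\not\subset H$, and likewise for $X'$, so again $c(H)=0$; moreover in both situations $H\neq J$, since $a\in J$, $a'\notin J$, and, ${\cal H}$ being a hierarchy, any cluster containing both $a$ and $a'$ is comparable to $J$ and hence contains $J$ strictly.

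So it remains to handle the clusters $H$ containing exactly one of $a,a'$. For such $H$ set $\varphi(H,A)=1$ if $H\cap A=\emptyset$, $\varphi(H,A)=-1$ if $A\subset H$, and $\varphi(H,A)=0$ if $H$ splits $A$. Unwinding the definition of $s_A(H)$ one gets $c(H)=\varphi(H,X)-\varphi(H,X')$ when $a\in H$ and $a'\notin H$, and $c(H)=-\bigl(\varphi(H,X)-\varphi(H,X')\bigr)$ when $a'\in H$ and $a\notin H$; so in both cases it suffices to show that $\varphi(H,X)-\varphi(H,X')$ is $1$ if $H=J$ and $0$ otherwise (and $H\neq J$ automatically in the second case). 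Consider first $a\in H$, $a'\notin H$, and recall that the ${\cal H}$-clusters through $a$ form a chain, so $H$ is comparable to $J$. If $H\subset J$, then $X\cap J=\emptyset$ (condition~4) forces $X\cap H=\emptyset$, so $\varphi(H,X)=1$; if moreover $H\subsetneq J$ then $a\in H\subset\overline J$, so the cluster $\overline J$ of condition~3 exists and $X'\cap\overline J=\emptyset$, whence $X'\cap H=\emptyset$ and $\varphi(H,X')=1$, giving difference $0$; if instead $H=J$, then condition~3 gives $X'\cap J\neq\emptyset$ while $X'\not\subset J$ (justified in the next paragraph), so $\varphi(J,X')=0$ and the difference is $1$, as wanted. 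If $J\subsetneq H$, then the cluster $\tilde J$ of condition~4 exists with $J\subsetneq\tilde J\subset H$; condition~4 puts an element of $X$ in $\tilde J-J\subset H$ and condition~3 puts an element of $X'$ in $J\subset H$, so neither $X$ nor $X'$ is disjoint from $H$; and neither is contained in $H$ either, because: if $a'\notin\cup_{H\in{\cal H}}H$, condition~2 gives an element of $X$ and an element of $X'$ lying outside the maximal cluster $M_J$ containing $J$, hence outside $H$ (which satisfies $H\subset M_J$); if $a'\in\cup_{H\in{\cal H}}H$ and $H\cap M_{a'}=\emptyset$, the element of $X$ and the element of $X'$ in $m_{a'}$ provided by condition~1.1 lie outside $H$ (since $m_{a'}\subset M_{a'}$ and $M_{a'}\cap H=\emptyset$); and if $a'\in\cup_{H\in{\cal H}}H$ and $H\cap M_{a'}\neq\emptyset$, then $H\subset M_{a'}$ (as ${\cal H}$ is laminar, $H\not\ni a'$, and $M_{a'}$ is maximal among clusters through $a'$), so the elements of $X$ and of $X'$ outside $M_{a'}$ given by condition~1.2 lie outside $H$. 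Hence $\varphi(H,X)=\varphi(H,X')=0$ throughout this last subcase.

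Next consider $a'\in H$, $a\notin H$ (so $H\neq J$): here $m_{a'}\subset H\subset M_{a'}$, the element of $X$ and the element of $X'$ in $m_{a'}$ given by condition~1.1 lie in $H$, so neither $X$ nor $X'$ is disjoint from $H$, while the elements of $X$ and of $X'$ outside $M_{a'}$ given by condition~1.2 lie outside $H$, so neither is contained in $H$; hence $\varphi(H,X)=\varphi(H,X')=0$, as needed. Finally, I justify $X'\not\subset J$ in the case $H=J$: if $a'\notin\cup_{H\in{\cal H}}H$ this is immediate from condition~2, since the element of $X'$ outside $M_J$ is also outside $J\subset M_J$; if $a'\in\cup_{H\in{\cal H}}H$, then, since $a'\notin J$ and ${\cal H}$ is laminar, either $m_{a'}\cap J=\emptyset$, in which case the element of $X'$ in $m_{a'}$ given by condition~1.1 is outside $J$, or $J\subsetneq m_{a'}\subset M_{a'}$, in which case the element of $X'$ outside $M_{a'}$ given by condition~1.2 is outside $J$. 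Combining all these computations gives $c(J)=1$ and $c(H)=0$ for every other $H\in{\cal H}$, which is exactly the identity \eqref{tesi}.

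The step I expect to be the main obstacle is precisely the verification that $X'\not\subset J$ when $H=J$ and, in the same spirit, that $X$ and $X'$ are not contained in the clusters $H\supsetneq J$: this is where the slightly asymmetric hypotheses~1.2 and~2 — forcing elements of $X'$ outside $M_{a'}$, respectively outside the maximal cluster containing $J$ — are genuinely needed, and where one must use laminarity to locate $m_{a'}$ and $M_{a'}$ relative to $J$. The rest (comparability of clusters through a common point, existence of $\overline J$, $\tilde J$ and $M_J$, and the elementary evaluations of $s_A(H)$ and $\varphi(H,A)$) is routine bookkeeping.
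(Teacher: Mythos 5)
Your proposal is correct and follows essentially the same route as the paper's proof: both expand the right-hand side coefficientwise and run a case analysis on whether a cluster $H$ contains $a$, $a'$, both, or neither, using laminarity to place $H$ in the chain through $a$ relative to $\overline J$, $J$, $\tilde J$, and invoking conditions 1--4 at exactly the same points (in particular conditions 1.1/1.2 or 2 to rule out $X,X'\subset H$ for $H\supsetneq J$ and $X'\subset J$). Your $\varphi$-bookkeeping is only a notational streamlining of the paper's sign count.
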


\begin{proof} 
In order to prove (\ref{tesi}),
we have to show that every ${\cal H}$-cluster $V$ different from $J$
does not appear in  the second member of (\ref{tesi}) and that $J$ appears 
with coefficient $1$. Let $V \in {\cal H}$.

$\bullet$ Suppose $V \not\ni a, a'$ (so $V \neq J$). 
 
In this case $V$ does not contain any of  $aX, a'X, aX', a'X'$
and we can conclude easily by considering the four possible cases:

- $V \cap X \neq \emptyset$, $V \cap X' \neq \emptyset$, 

- $V \cap X = \emptyset$, $V \cap X' \neq \emptyset$, 

- $V \cap X \neq \emptyset$, $V \cap X' = \emptyset$, 

- $V \cap X = \emptyset$, $V \cap X' = \emptyset$.

 





$\bullet$ Suppose $V \ni a'$ (so $V \neq J$).  
 
Then $a' \in \cup_{H \in {\cal H}} H$, therefore, by assumption (1), $b \in X, X'$,
$c \in X$, $c' \in X'$.
Moreover $V \ni a'$, thus $V \ni b$, so $ V \cap X \neq \emptyset$ and $ V \cap X' \neq \emptyset$. 
Since $c \in X$, $c'  \in X'$ and $c,c' \not\in  V$, we have that $ X \not \subset V$ and 
$X' \not \subset V$, so we can conclude.

$\bullet$ Suppose $V \ni a$, $V \not \ni a'$.  
 
There are at most three possible cases:  $V \subset \overline{J}$, $V \supset \tilde{J}$, $V= J$.

- If $V \subset \overline{J}$, then $V \cap X = \emptyset$ and $V \cap X' = \emptyset$ by assumptions (3) and (4),
thus $V \not \supset (a'X')$, $ V \not \supset (a'X)$,
$V \not \supset (aX')$, $ V \not \supset (aX)$,
$V \cap (a'X') = \emptyset$ and $V \cap (a'X) = \emptyset$. Moreover, 
since $V \ni a$, $V \cap (aX') \neq \emptyset$ and $V \cap (aX)  \neq \emptyset$ and we conclude easily.

- If $V \supset \tilde{J}$, then $V \cap X \neq \emptyset $ and 
$V \cap X' \neq \emptyset $ since $\tilde{J} \cap X \neq \emptyset $ 
and  $\tilde{J} \cap X' \neq \emptyset $ by assumptions (3) and (4).  

Suppose  $a' \in \cup_{H \in {\cal H}} H$.
 Then, if $V$ contained $X$, then  it would contain $b$ and thus
 $V \cap m_{a'}$ would be nonempty; thus either $m_{a'} \subset V$ or $V \subset m_{a'}$; if $m_{a'} \subset V $, we would have $a' \in V $, which is absurd; if $ 
 V \subset m_{a'}$, we would have $c \in X \subset V \subset m_{a'} \subset M_{a'}$, so 
 $c \in M_{a'}$, which is absurd; so $V$ does not contain $X$. Analogously  
 $V$ does not contain $X'$.  So  $V \not \supset (a'X')$, $ V \not \supset (a'X)$,
$V \not \supset (aX')$, $ V \not \supset (aX)$, and we conclude.

Suppose  $a' \not\in \cup_{H \in {\cal H}} H$. Hence $X$ and $X'$ contain $d$ by assumption (2). Then, if $V$ contained $X$, then  it would contain $d$, which is absurd since $d$ is not in the maximal cluster containing $J$; 
thus $V$ does not contain $X$.  Analogously  
 $V$ does not contain $X'$.  So  $V \not \supset (a'X')$, $ V \not \supset (a'X)$,
$V \not \supset (aX')$, $ V \not \supset (aX)$, and we conclude.

- Finally consider the cluster $J$. 
We have that $J \cap X' \neq \emptyset $ by assumption (3) and $J \ni a$, so 
$J \cap (aX) \neq \emptyset $, $J \cap (aX') \neq \emptyset $, $J \cap (a'X') \neq \emptyset $.
Since $a' \not \in J $ and $J \cap X = \emptyset $ by assumption (4), we have that $J \cap (a'X) = \emptyset $. 
Moreover $J \not \supset (aX)$, since $J \cap X = \emptyset$, and 
 $J \not \supset (a'X)$ and  $J \not \supset (a'X')$, since $J \not \ni a'$.
Finally  $J \not \supset X'$, in fact: 
 if  $a' \in \cup_{H \in {\cal H}} H$, then $b \in X'$ by assumption (1), 
 so, if  $J $ contained $X'$, 
it would contain $b$, thus $J \cap m_{a'}$ would be nonempty, hence either 
$ J \subset m_{a'}$ or $ m_{a'} \subset J$; 
if $m_{a'} \subset J$, we would have $ a' \in J$, which is absurd;
if $J \subset m_{a'}$, we would have 
$c' \in X' \subset J \subset m_{a'} \subset M_{a'}$, thus $c' \in M_{a'}$, which is absurd; 
  if $a' \not \in \cup_{H \in {\cal H}} H$, then $d \in X'$ by assumption (2), so, if  $J $ contained $X'$, it would contain $d$, which is absurd.
So $J \not \supset X'$, thus 
$ J \not \supset (aX')$ and  we can conclude.


\end{proof}

{\small }

\bigskip

{\bf Address of both authors:}
Dipartimento di Matematica e Informatica ``U. Dini'', 
viale Morgagni 67/A,
50134  Firenze, Italia

{\bf
E-mail addresses:}
baldisser@math.unifi.it, rubei@math.unifi.it

\end{document}